\newtheorem{theorem}{Theorem}[section]
\newtheorem{corollary}[theorem]{Corollary}
\newtheorem{proposition}[theorem]{Proposition}
\newtheorem{conjecture}[theorem]{Conjecture}
\theoremstyle{definition}
\newtheorem{example}[theorem]{Example}
\newtheorem{remark}[theorem]{Remark}
\numberwithin{equation}{section}
\newcommand\R{\mathbb{R}}
\newcommand\Z{\mathbb{Z}}
\newcommand\ir{\mathrm{i}}
\newcommand\jr{\mathrm{j}}
\newcommand\kr{\mathrm{k}}
\title[Universality of Euler equation]{On the universality of the incompressible Euler equation on compact manifolds, II.  Non-rigidity of Euler flows}
\author[T. Tao]{Terence Tao}
\address[T. Tao]{UCLA Department of Mathematics, Los Angeles, CA 90095-1555, USA}
\email{{\tt tao@math.ucla.edu}}
\keywords{Incompressible euler equations}
\subjclass[2010]{35Q35, 37N10, 76B99}
\begin{document}

\begin{abstract}
The incompressible Euler equations on a compact Riemannian manifold $(M,g)$ take the form
\begin{align*}
\partial_t u + \nabla_u u &= - \mathrm{grad}_g p \\
\mathrm{div}_g u &= 0,
\end{align*}
where $u: [0,T] \to \Gamma(T M)$ is the velocity field and $p: [0,T] \to C^\infty(M)$ is the pressure field.  In this paper we show that if one is permitted to extend the base manifold $M$ by taking an arbitrary warped product with a torus, then the space of solutions to this equation becomes ``non-rigid'' in the sense that a non-empty open set of smooth incompressible flows $u: [0,T] \to \Gamma(T M)$ can be approximated in the smooth topology by (the horizontal component of) a solution to these equations.  We view this as further evidence towards the ``universal'' nature of Euler flows.
\end{abstract}

\maketitle

%%%%%%%%%%%%%%%%%%%%%%%%%%%%%%%%%%%%%%%%%%%%%%%%%%%%%%%%%%%%%%%%%%%%%%%%%%%%%%%%

\section{Introduction}

Let $(M,g)$ be a compact connected smooth orientable Riemannian manifold without boundary (which we henceforth abbreviate as \emph{compact Riemannian manifold}).  The Euler equations for an incompressible fluid on $M$ take the form
\begin{equation}\label{euler}
\begin{split}
\partial_t u + \nabla_u u &= - \mathrm{grad}_g p \\
\mathrm{div}_g u &= 0
\end{split}
\end{equation}
where for each time $t$, $u(t) \in \Gamma(TM)$ is a smooth vector field on $M$ (the \emph{velocity field}), $p \in C^\infty(M)$ is a smooth scalar field (the \emph{pressure field}), $\mathrm{grad}_g$ is the gradient with respect to the metric $g$, $\mathrm{div}_g$ is the divergence with respect to $g$ (or the volume form associated with $g$), and $\nabla$ is the Levi-Civita connection (which we apply to tensors of any rank).  These equations may be interpreted as geodesic flow on the infinite-dimensional manifold of volume-preserving diffeomorphisms of $M$; see \cite{ebin}.  We will only consider classical (i.e., smooth) solutions to \eqref{euler} in this paper.

To facilitate the various differential geometry calculations, we will use two subtly different notational conventions.  As a default we shall rely on \emph{Penrose abstract index notation}, in which tensors will be decorated with placeholder superscript and subscript indices in non-italic font such as $\ir,\jr,\kr$, which are not assigned a coordinate interpretation, but are merely used to indicate the rank of the tensors involved and to indicate the various contraction and covariant differentiation operations.  For instance, the velocity field $u$ could be represented in this notation as $u^{\kr}$, and the divergence $\mathrm{div}_g u$ would be $\nabla_\kr u^\kr$, but the index $\kr$ is merely a placeholder and has no coordinate interpretation; similarly, $\nabla_\jr u^\kr$ is the rank $(1,1)$ tensor with the property that for any test vector field $X$ (expressed in abstract index notation as $X^\jr$), the covariant derivative $\nabla_X u$ is expressed in abstract index notation as $(\nabla_X u)^\kr = X^\jr \nabla_\jr u^\kr$.  However, we will also use the notation of \emph{local coordinates}.  When this is indicated, we are implicitly working in a coordinate patch of $M$ with coordinates $x^1,\dots,x^d$, and the indices $i,j,k$ (now in italic) are no longer abstract placeholders, but are instead ranging in $\{1,\dots,d\}$, with the Einstein summation conventions in effect.  Once local coordinates are selected, the abstract superscripts and subscripts in a tensor expressed in Penrose notation can also be viewed (by abuse of notation) as local coordinate tensors (replacing all non-italic abstract symbols by their italic counterparts).  For instance, $\nabla_j u^k = (\nabla^j u)^k$ are the local coordinates of the rank $(1,1)$ tensor $\nabla_\jr u^\kr$, and expressed in these local coordinates as
$$ \nabla_j u^k = \partial_j u^k + \Gamma^k_{ji} u^i$$
where $\partial_j = \frac{\partial}{\partial x^j}$ denotes the partial derivative in the $x^j$ direction and $\Gamma^k_{ji}$ are the usual Christoffel symbols in these coordinates.  Note that neither of the two terms $\partial_j u^k$, $\Gamma^k_{ji} u^i$ will be expected to arise as local coordinates of a tensor in generall in particular the expressions $\partial_\jr u^\kr$ and $\Gamma^\kr_{\jr \ir} u^\ir$ are undefined in our notational conventions.  On the other hand, for a scalar field such as $p$, the one-form $\nabla_\jr p$ is expressed in local coordinates as
$$ \nabla_j p = \partial_j p$$
and for a $1$-form $u^\flat$ (expressed in abstract index notation as $u_\kr$, and in local coordinates as $u_k$) the covariant derivative $\nabla u^\flat$ (expressed in abstract index notation as $\nabla_\jr u_\kr$, and in local coordinates as $\nabla_j u_k = (\nabla^j u^\flat)_k$) would instead be expressed in local coordinates by the formula
$$ \nabla_j u_k = \partial_j u_k - \Gamma^i_{jk} u_i.$$
In particular, $\nabla_j$ and $\partial_j$ are \emph{not} fully interchangeable symbols in local coordinates.  Finally, a vector field $u$ can be viewed as a first-order differential operator, which in local coordinates is expressed by
$$ u = u^k \frac{\partial}{\partial x^k}.$$

Returning now to abstract index notation, the equations \eqref{euler} then become
\begin{align*}
 \partial_t u^\kr + u^\jr \nabla_\jr u^\kr &= - \nabla^\kr p \\
\nabla_\kr u^\kr &= 0
\end{align*}
where we use the metric $g$ to raise and lower indices for covariant differentiation in the usual manner.  It will be convenient to lower indices in the first equation (implicitly using the basic property $\nabla g = 0$ of the Levi-Civita connection) and rewrite this system as
\begin{equation}\label{euler-diff}
\begin{split}
 \partial_t u_\kr + u^\jr \nabla_\jr u_\kr &= - \nabla_\kr p \\
\nabla_\kr u^\kr &= 0
\end{split}
\end{equation}
where we raise and lower indices on $u$ in the usual fashion:
\begin{equation}\label{vi-def}
u_\kr := g_{\jr \kr} u^\jr.
\end{equation}
In local coordinates, $u^\flat = u_k dx^k$ is the velocity one-form associated to $u$ by the musical isomorphism.

In the case of the torus $M = (\R/\Z)^3$ with the flat Euclidean metric, it is a famous open problem (see e.g., \cite{constantin}) as to whether smooth solutions to \eqref{euler} can develop singularities in finite time.  As a model question, one can allow $M$ to have arbitrary dimension and metric, thus we pose

\begin{conjecture}[Finite time blowup for Euler]\label{conj}  There exists a compact Riemannian manifold $(M,g)$ of some dimension $d > 2$, and a smooth solution $u: [0,T_*) \to \Gamma(TM)$, $p: [0,T_*) \to C^\infty(M)$ to the Euler equations \eqref{euler} which cannot be smoothly continued to the blowup time $T_* < \infty$.
\end{conjecture}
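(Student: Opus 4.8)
The plan is to deduce Conjecture~\ref{conj} from the non-rigidity theorem of this paper by \emph{embedding a self-amplifying dynamical system} into an Euler flow on a suitable warped product manifold. The non-rigidity result lets us realize (the horizontal component of) essentially arbitrary smooth incompressible flows $u\colon[0,T]\to\Gamma(TM)$, up to arbitrarily small smooth error, as genuine solutions of \eqref{euler} on $M\times_\phi(\R/\Z)^n$. The idea is then to choose $M$ and the target flow so that this flow transports a localized ``blob'' of energy through a sequence of regions of $M$ in which the induced dynamics speeds up geometrically, compressing infinitely many stages into a finite time and driving some Sobolev norm of the horizontal velocity to infinity. Equivalently, one wants a fixed ``universal machine'' metric $(M,g)$ together with an exact (or nearly exact) Euler flow whose restriction to a coordinate chart looks, after a parabolic rescaling in space and time, like a fixed ``gadget'' dynamics that maps an initial configuration to a doubled-in-intensity, halved-in-scale copy of itself.

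First I would isolate the combinatorial core: produce a smooth incompressible gadget flow on a fixed compact piece $N$ which, in time $1$, maps a standard localized profile of amplitude $A$ and length scale $\ell$ to the same profile with amplitude $2A$ and scale $\ell/2$, with all derivatives controlled uniformly in $A,\ell$ after the obvious rescaling, and with the nontrivial structure supported away from a fixed collar region. Second, I would glue countably many rescaled copies of $N$ into a single compact manifold $M$ (the rescalings accumulating at a point or submanifold) and patch the gadget flows into one time-dependent incompressible field $u$ on $M$; by construction the flow of $u$ threads the blob through the copies, the natural Euler time to traverse the $k$-th copy is $\sim 2^{-k}$, so the cascade finishes at $T_* = \sum_k 2^{-k} < \infty$ while $\|u(t)\|_{H^s}\to\infty$ as $t\to T_*$. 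Third, I would apply the non-rigidity theorem on each interval $[T_k,T_{k+1}]$ to upgrade $u$ to an exact Euler flow on $M\times_\phi(\R/\Z)^n$, matching horizontal data at the interface times, and show the accumulated approximation error stays small enough not to arrest the cascade.

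The main obstacle, and the reason this is posed as a conjecture rather than proved here, is the tension between the time scales: the non-rigidity theorem controls a \emph{single fixed} finite interval $[0,T]$ and introduces an error whose size, while arbitrarily small, is not obviously summable against a cascade whose stages shrink geometrically — one must quantify how the error and the required warping $\phi$ degrade as the gadget scale $\ell\to0$, and arrange that the error at stage $k$ is dominated by the energy gap to stage $k+1$. A closely related difficulty is that the warped product and the embedding produced by the theorem may depend on the target flow at each stage, whereas blowup requires a \emph{single} ambient manifold $M\times_\phi(\R/\Z)^n$ valid for all $t<T_*$; reconciling these likely forces a scale-invariant version of the construction in which the fibre dimension $n$, the warping $\phi$, and the size of the correction are bounded independently of the stage. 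I would also expect genuine work in checking that the concatenated flow extends smoothly \emph{up to} but not \emph{past} $T_*$, i.e.\ that no hidden regularization coming from the pressure term or from the vertical fibre dynamics smooths out the singularity.
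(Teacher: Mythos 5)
There is no proof in the paper to compare against: the statement you are addressing is Conjecture~\ref{conj}, which the paper explicitly leaves open (``We believe the answer \dots\ to be affirmative, but have been unable to demonstrate this rigorously''), and Theorem~\ref{main} is offered only as \emph{evidence} of universality, not as a route that is carried out. So your proposal must be judged as a standalone proof, and it is not one; you candidly flag the obstacles yourself, but they are more fatal than ``quantitative bookkeeping to be done.''

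Concretely: (a) Theorem~\ref{main}(ii) is qualitative and local in ${\mathcal F}$ --- it gives density of ${\mathcal E}$ only inside the specific open set $U$ cut out by the pointwise bounds $\frac{1}{8T} < u^j < \frac{1}{4T}$ on $M=(\R/\Z)^d$, and its proof hinges on these bounds to make $(t,x)\mapsto (A(t,x),x^i)$ a diffeomorphism onto a domain. A cascade flow whose amplitude doubles at each stage exits every such regime, so the theorem simply does not apply at late stages; moreover your glued manifold of rescaled gadgets is not a torus, while (ii) requires $M=(\R/\Z)^d$. (b) Each application of the theorem produces an extension $\tilde M$ whose fibre dimension and warping factors $\tilde g^{ss}$ come from the Fourier/partition-of-unity approximation $F^{(N)}$ of a forcing built from the \emph{given} flow; these data change with the stage and with $N$, so there is no single compact $(\tilde M,\tilde g)$ carrying all stages. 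Worse, gluing in time is obstructed in principle: the swirl/density fields $\rho_s$ are dynamical state that forces the horizontal equation through the Boussinesq term $\rho_s \nabla_\ir \tilde g^{ss}$ in \eqref{eq-5}, so you cannot re-initialize them at interface times and still have one smooth Euler solution. (c) The approximation is in the smooth topology on a fixed compact $[0,T]$ with no rate, so ``error at stage $k$ dominated by the gap to stage $k+1$'' is not something the method furnishes. (d) Finally, even granting all of the above, knowing that each truncation of your cascade can be approximated by (extendible) Euler flows on $[0,T_k]$ yields only smooth solutions up to each $T_k$; blowup at $T_*$ is not preserved under smooth approximation on compact subintervals of $[0,T_*)$, so an entirely new compactness or continuation-criterion argument would be needed to produce a single solution that actually fails to continue past $T_*$. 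That missing limiting mechanism is the heart of the conjecture, and it is not supplied here.
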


We restrict attention here to the high-dimensional case $d > 2$, since global regularity is known for $d=2$ (see e.g., \cite[Chapter 17, Proposition 2.5]{taylor}), and the $d=1$ case is degenerate.

We believe the answer to Conjecture \ref{conj} to be affirmative, but have been unable to demonstrate this rigorously.  However, we believe that a possible route towards establishing this conjecture is to demonstrate that the dynamics of \eqref{euler} are sufficiently ``universal'' that they can encode some sort of ``von Neumann machine'' that can generate smaller (and more rapidly evolving) copies of itself, leading to finite time blowup, as per the discussion in \cite[\S 1.3]{tao-ns}.

One piece of evidence towards this universality was presented in a previous paper \cite{tao-euler-univ} by this author, in which it was shown that by suitably selecting the manifold $M$ and the metric $g$, one could embed the dynamics of any finite-dimensional quadratic ODE
\begin{equation}\label{odeb}
 \partial_t y = B(y,y) 
\end{equation}
inside the Euler equations \eqref{euler}, as long as the bilinear map $B \colon \R^n \times \R^n \to \R^n$ was symmetric and obeyed a conservation law
\begin{equation}\label{cons}
 \langle B(y,y), y \rangle = 0
\end{equation}
for all $y \in \R^n$ and some positive definite inner product $\langle,\rangle \colon \R^n \times \R^n \to \R$ (such a conservation law is necessary, given the fact that the Euler dynamics \eqref{euler} conserve energy).  Unfortunately, this result does not directly impact Conjecture \ref{conj}, because solutions to the ODE \eqref{odeb} are necessarily global in time, as the conservation law \eqref{cons} prevents $\langle y, y \rangle$ from blowing up (or in fact varying at all in time).

In this paper we present a separate piece of evidence towards universality, demonstrating a non-rigidity phenomenon for solutions $u$ to \eqref{euler} that is faintly reminiscent of the ``$h$-principle'' of Gromov \cite{gromov} that has been successfully deployed (see e.g. \cite{del} for a recent survey) to construct weak solutions of the Euler equations, though with the key difference that the non-rigidity is achieved in this paper by adding additional spatial dimensions to the problem, rather than by adding highly oscillatory corrections to the solution; in particular, we do not use the technology of convex integration in our arguments, relying instead on the tools closer in spirit to Stone-Weierstrass theorem (more precisely, in our argument we will use the ability to approximate smooth functions in the smooth topology by trigonometric polynomials).  Also, as mentioned previously, our constructions give classical (smooth) solutions to the Euler equations rather than weak ones.

To describe this lack of rigidity we need some additional notation.  Suppose $M = (M,g)$ is a Riemannian manifold of dimension $d$; the metric $g$ may be written in local coordinates as
$$ dg^2 = g_{ij}(x) dx^i dx^j$$
at every point $x \in M$.  We define an \emph{extension} of $M$ to be a Riemannian manifold $\tilde M = (\tilde M, \tilde g)$ of some dimension $d+m$, where the manifold $\tilde M$ is formed (as a smooth manifold) as the product of $M$ with a torus,
$$ \tilde M = M \times (\R/\Z)^m = \{ (x,\theta): x \in M, \theta \in (\R/\Z)^m \},$$
and the metric $\tilde g$ is an (componentwise) warped product of $(M,g)$ with the standard Euclidean torus, with metric expressed in local coordinates by the fomrula
$$ d \tilde g^2 = g_{ij}(x) dx^i dx^j + \sum_{s=1}^m \tilde g_{ss}(x) (d\theta^s)^2$$
at $(x,\theta) \in \tilde M$ for some smooth functions $\tilde g_{ss} \in C^\infty(M)$, $s=1,\dots,m$ obeying the volume preservation condition
\begin{equation}\label{pag}
 \prod_{s=1}^m \tilde g_{ss}(x) = 1
\end{equation}
for all $x \in M$, where $\theta^s$, $s=1,\dots,m$ are the standard coordinates of $\theta \in (\R/\Z)^m$.  (In contrast to the indices $i,j,k$ to which the Einstein summation notation is applied, we will \emph{not} use any summation conventions for the index $s$.)  We refer to $M$ as the \emph{base manifold} for the extended manifold $\tilde M$; coordinates $x^i$ on the base manifold will be referred to as \emph{horizontal coordinates}, while coordinates $\theta^s$ for the ``vertical'' torus $(\R/\Z)^m$ will be referred to as \emph{vertical coordinates}.  Similarly, we see that the tangent space $T_{(x,\theta)} \tilde M$ splits as the orthogonal sum of (a copy of) $T_x M$ (spanned by the vector fields $\frac{\partial}{\partial x^i}$ in local coordinates) and (a copy of) $T_\theta (\R/\Z)^m$ (spanned by the vector fields $\frac{\partial}{\partial \theta^s}$).
The volume preservation condition \eqref{pag} can also be viewed as expressing a product relationship
\begin{equation}\label{prodrel}
 d\tilde g = dg d\theta^1 \dots d\theta^m
\end{equation}
between the Riemannian volume $d\tilde g$ of $(\tilde M,\tilde g)$ and the Riemannian volume $dg$ of $(M,g)$.

Define a \emph{flow} on $M$ to be a smooth function $u: [0,T] \to \Gamma(TM)$ from a time interval $[0,T]$ to the space $\Gamma(TM)$ of smooth vector fields of $M$.  We say that the flow is \emph{incompressible} if it is divergence-free with respect to the metric $g$, thus\footnote{Equivalently, the Lie derivative ${\mathcal L}_u dg$ of the Riemannian volume form $dg$ along the vector field $u$ vanishes.}  $\mathrm{div}_g u = 0$ at every point $(t,x)\in [0,T] \times M$ in spacetime. In local coordinates, a  flow $u$ on $M$ can be expressed as
$$ u(t,x) = u^i(t,x) \frac{\partial}{\partial x^i}$$
and it is incompressible if $\nabla_\ir u^\ir = 0$.
A flow $\tilde u$ on an extension $\tilde M$ of $M$ is said to be an \emph{extension} of $u$ if it takes the form
\begin{equation}\label{tux}
 \tilde u(t,(x,\theta)) = u^i(t,x) \frac{\partial}{\partial x^i} + \sum_{s=1}^m \tilde u^s(t,x) \frac{\partial}{\partial \theta^s}
\end{equation}
in local coordinates for some smooth \emph{swirl coefficients} $\tilde u^s: [0,T] \to  C^\infty(M)$, $s=1,\dots,m$, where $\frac{\partial}{\partial \theta^s}$, $s=1,\dots,m$ are the standard basis vector fields for the torus $(\R/\Z)^m$.  We refer to $u$ as the \emph{base flow} for the extended flow $\tilde u$; informally, $u$ describes the ``horizontal'' behaviour of $\tilde u$, while the swirl coefficients $\tilde u^s$ describe the ``vertical'' behaviour.

Observe that if a flow $u$ on $M$ is incompressible, then by Stokes' theorem (and reverting back to abstract index notation) we have
$$ \int_M u^\ir(t,x) \nabla_\ir \phi(x)\ dg(x) = 0$$
for all test functions $\phi \in C^\infty(M)$, where $dg$ denotes the Riemannian volume form on the (orientable) manifold $M$.  (One can also view $u^\ir \nabla_\ir \phi = u(\phi)$ as the first-order differential operator $u$ applied to $\phi$.)  By \eqref{pag}, the volume form $d\tilde g$ on any extension $\tilde M$  of $M$ is equal to the product measure of $M$ and the standard volume form on $(\R/\Z)^m$.  In local coordinates, we of course have $u^i \nabla_i \phi = u^i \partial_i \phi$.  If $\tilde u$ is an extension of $u$ on $\tilde M$, we may then integrate by parts in the vertical variables to conclude that
$$ \int_{\tilde M} \left( u^i(t,x) \partial_i \tilde \phi(x,\theta) + \sum_{s=1}^m \tilde u^s(t,x) \frac{\partial}{\partial \theta^s} \tilde \phi(x,\theta)\right) \ d\tilde g(x,\theta) = 0$$
for all test functions $\tilde \phi \in C^\infty(\tilde M)$ supported in a local coordinate patch; integrating by parts using \eqref{tux} we conclude that any extension $\tilde u$ of $u$ to $\tilde M$ is also incompressible\footnote{This conclusion could also have been reached by explicit calculation of the Christoffel symbols implicit in the incompressibility conditions $\mathrm{div}_g u = 0$ and $\mathrm{div}_{\tilde g} \tilde u = 0$, and also taking a logarithmic derivative of \eqref{pag}; alternatively, one could use \eqref{prodrel} and the Lie derivative interpretation of incompressibility from the preceding footnote.}:
$$ \mathrm{div}_{\tilde g} \tilde u = 0.$$
The argument is completely reversible; thus if $\tilde u$ is an extension of $u$, then $\tilde u$ is incompressible if and only if $u$ is.
 
We say that a flow $u: [0,T] \to \Gamma(TM)$ on $M$ is an \emph{Euler flow} if there is a smooth pressure field $p: [0,T] \to C^\infty(M)$ such that $(u,p)$ solves the Euler equations \eqref{euler} on $[0,T] \times M$; clearly this is only possible if $u$ is incompressible.  We say that $u$ is \emph{extendible to an Euler flow} if there exists an extension $\tilde M$ of $M$ and an extension $\tilde u$ of $u$ to $M$, such that $\tilde u$ is an Euler flow. Again, by the above discussion, this is only possible if $u$ is incompressible.

\begin{example}\label{exo}  A familiar near-example of this setup is that of axisymmetric (with swirl) solutions to the Euler equations in Euclidean space $\R^3$.  Observe that by using the cylindrical change of coordinates
$$ (x^1,x^2,x^3) = (r \cos \theta, r \sin \theta, z)$$
one can view $\R^3$ (with the $z$-axis deleted) as an extension of the Euclidean half-space 
$$ M \coloneqq \{ (r,z): r \in (0,+\infty); z \in \R \}$$
(ignoring for this discussion the fact that these manifolds are non-compact, and that $\theta$ takes values in $\R/2\pi \Z$ rather than $\R/\Z$) with the extended metric being given by the warped product
$$ dr^2 + dz^2 + r^2 d\theta^2.$$
This extension is not volume-preserving, so does not strictly fall under the framework considered here, but we will continue to discuss it as a motivating example.  The extensions $\tilde u$ of a two-dimensional flow
$$ u(t,r,z) = u^r(t,r,z) \frac{\partial}{\partial r} + u^z(t,r,z) \frac{\partial}{\partial z} $$
on $M$ now take the form
$$ \tilde u(t,(r,z,\theta)) = u^r(t,r,z) \frac{\partial}{\partial r} + u^z(t,r,z) \frac{\partial}{\partial z} + \tilde u^\theta(t,r,z) \frac{\partial}{\partial \theta},$$
that is to say one interprets $u$ as an axisymmetric flow and then augments that flow with some arbitrary ``swirl'' $\tilde u^\theta(t,r,z) \frac{\partial}{\partial \theta}$.  A standard calculation (see e.g., \cite[\S 2.3]{bertozzi}) then shows that in order for $\tilde u$ to be an Euler flow, one must obey the modified divergence-free condition
\begin{equation}\label{prz}
 \partial_r (r u^r) + \partial_z(r u^z) = 0,
\end{equation}
the circulation condition
\begin{equation}\label{circ}
 D_t (r \tilde u^\theta) = 0,
\end{equation}
where $D_t \coloneqq \partial_t + u^r \partial_r + u^z \partial_z$ denotes the material derivative on the base $M$, and the vorticity equation
\begin{equation}\label{vort}
 D_t \left(\frac{\partial_z u^r - \partial_r u^z}{r} \right) = -\frac{1}{r^4} \partial_z ( (r \tilde u^\theta)^2 ).
\end{equation}
Thus, if one ignores the facts that the manifolds are non-compact and the extension is not volume-preserving, $u = u^r \frac{\partial}{\partial r} + u^z \frac{\partial}{\partial z}$ would extend to an Euler flow if one could find a field $u^\theta: [0,T] \to C^\infty(M)$ that obeyed the equations \eqref{prz}, \eqref{circ}, \eqref{vort}.  One can recover a volume-preserving extension (up to constants) in this setting (thus getting closer to the situation actually studied in this paper) by using Turkington coordinates 
$$ (x^1,x^2,x^3) = (\sqrt{2y} \cos \theta, \sqrt{2y} \sin \theta, z)$$
in place of cylindrical coordinates; see \cite{turk} for details.
\end{example}

We can now give the main result of the paper.

\begin{theorem}[Main theorem]\label{main}  Let $0 < T < \infty$, and let $(M,g)$ be a compact Riemannian manifold of some dimension $d \geq 2$.  Let ${\mathcal F}$ denote the space of incompressible flows $u: [0,T] \to \Gamma(TM)$, equipped with the smooth topology (in spacetime), and let ${\mathcal E} \subset {\mathcal F}$ denote the space of such flows that are extendible to Euler flows.  
\begin{itemize}
\item[(i)]  (Generic inextendibility) Assume $d \geq 3$.  Then ${\mathcal E}$ is of the first category in ${\mathcal F}$ (the countable union of nowhere dense sets in ${\mathcal F}$).
\item[(ii)]  (Non-rigidity)  Assume $M = (\R/\Z)^d$ (with an arbitrary metric $g$).  Then ${\mathcal E}$ is somewhere dense in ${\mathcal F}$ (that is, the closure of ${\mathcal E}$ has non-empty interior).
\end{itemize}
\end{theorem}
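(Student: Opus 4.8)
The plan is to first convert extendibility into an explicit system on the base $M$, and then, for part~(ii), to realise this system for an open family of base flows on the torus by exploiting the freedom to add many vertical dimensions.

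\emph{Reduction.} On an extension $\tilde M=M\times(\R/\Z)^m$ as above, write $\phi_s\coloneqq\tfrac12\log\tilde g_{ss}$; a direct computation shows the only nonzero Christoffel symbols of $\tilde g$ (relative to the horizontal/vertical splitting) are $\tilde\Gamma^k_{ij}=\Gamma^k_{ij}$, $\tilde\Gamma^s_{is}=\tilde\Gamma^s_{si}=\partial_i\phi_s$ and $\tilde\Gamma^k_{ss}=-g^{ki}\tilde g_{ss}\,\partial_i\phi_s$ (no sum on $s$). Since any extension \eqref{tux} is automatically incompressible, and since integrating the $\theta^s$--component of \eqref{euler-diff} over the vertical torus forces the pressure to be $\theta$--independent, one finds that $\tilde u$ is an Euler flow on $\tilde M$ iff there is $\tilde p:[0,T]\to C^\infty(M)$ with
\begin{align*}
 \partial_t u_\kr + u^\jr\nabla_\jr u_\kr &= -\nabla_\kr\tilde p + \tfrac12\sum_{s=1}^m(\tilde u^s)^2\,\nabla_\kr\tilde g_{ss},\\
 D_t\big(\tilde g_{ss}\,\tilde u^s\big) &= 0\qquad(s=1,\dots,m),
\end{align*}
where $D_t\coloneqq\partial_t+u^\ir\nabla_\ir$. (As a check, these conserve $\tfrac12\int_M(|u|_g^2+\sum_s\tilde g_{ss}(\tilde u^s)^2)\,dg$, the energy of $\tilde u$ on $\tilde M$.) Setting $w^s\coloneqq\tilde g_{ss}\tilde u^s$ (transported by $u$) and $\psi_s\coloneqq\tilde g_{ss}^{-1}$, and letting $\mathbb{P}$ be the Leray projection onto divergence--free vector fields of $(M,g)$ (subtraction of a $\mathrm{grad}_g$), this says: $u$ is extendible to an Euler flow iff the residual $R[u]\coloneqq\mathbb{P}(\partial_t u+\nabla_u u)$ equals $\mathbb{P}\big(-\tfrac12\sum_s(w^s)^2\mathrm{grad}_g\psi_s\big)$ for finitely many $u$--transported $w^s$ and positive $\psi_s\in C^\infty(M)$ with $\prod_s\psi_s=1$ (apply $\mathbb P$ to the first equation; the gradient part recovers $\tilde p$). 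The product constraint costs nothing: appending one vertical dimension with $\psi_{m+1}=(\prod_{s\le m}\psi_s)^{-1}$ and a small constant $w^{m+1}$ alters the right side by a pure gradient, and pairing dimensions with warps $\psi$ and $\psi^{-1}$ lets one, modulo routine bookkeeping, use essentially arbitrary signed coefficients that are values of $u$--transported functions.

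\emph{Part~(i).} Write $\mathcal{E}=\bigcup_n\mathcal{E}_n$ with $\mathcal{E}_n$ the set of $u$ admitting extension data with $m\le n$ and $\|\tilde g_{ss}\|_{C^n},\|\tilde g_{ss}^{-1}\|_{C^n},\|\tilde u^s\|_{C^n},\|\tilde p\|_{C^n}\le n$. Each $\mathcal{E}_n$ is closed (limits of base flows whose bounded data converge, after Arzel\`a--Ascoli, still solve the reduced system), and, when $d\ge3$, has empty interior: near any $u$ one can perturb so that $R[u]$ leaves the set of residuals attainable with data bounded by $n$ --- this set is the image of a precompact family of data under a continuous map, hence contains no ball of solenoidal fields, whereas for $d\ge3$ the map $v\mapsto R[u+v]$ on small incompressible $v$ covers a neighbourhood of $R[u]$. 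Hence $\mathcal{E}$ is of the first category.

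\emph{Part~(ii): reduction to an approximation.} Fix a target $u_*$ on $M=(\R/\Z)^d$. I would produce members of $\mathcal{E}$ near $u_*$ by \emph{solving} the coupled system: with extension data $(\psi_s,w^s(0,\cdot))$ and initial datum $u_*(0,\cdot)$, the forced incompressible Euler equation for $u$ with forcing $-\tfrac12\sum_s(w^s)^2\mathrm{grad}_g\psi_s$, coupled with $D_tw^s=0$, is well posed, and any solution $u$ lies in $\mathcal{E}$ by the Reduction. Since $u$ and $u_*$ are both forced--Euler solutions with the same datum, $\|u-u_*\|_{C^k([0,T]\times M)}$ is controlled by how well $\mathbb{P}\big(-\tfrac12\sum_s(w^s(0,\cdot)\!\circ\!\Phi_{u_*}^{-t})^2\mathrm{grad}_g\psi_s\big)$ approximates $R[u_*]$ (with $\Phi_{u_*}^t$ the flow of $u_*$), up to an error measured by $\|u-u_*\|$ itself; closing this self--consistently on $[0,T]$ --- via a contraction or degree argument, or by instead matching $u$ to $u_*$ to high order at $t=0$ with uniform spatial bounds --- reduces (ii) to the statement: there is a nonempty open $U\subseteq\mathcal{F}$ so that for all $u_*\in U$, $\delta>0$ and $N$, one can choose finitely many positive $\psi_s$ and functions $w^s(0,\cdot)$ on $M$ with $\big\|R[u_*]-\mathbb{P}\big(-\tfrac12\sum_s(w^s(0,\cdot)\!\circ\!\Phi_{u_*}^{-t})^2\mathrm{grad}_g\psi_s\big)\big\|_{C^N([0,T]\times M)}<\delta$.

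\emph{Part~(ii): the crux.} Here the torus and the Stone--Weierstrass mechanism enter: take the $\psi_s$ from a rich finite family of trigonometric polynomials (say $\psi_s\in\{2+\cos2\pi k\!\cdot\!x,\,2+\sin2\pi k\!\cdot\!x:|k|\le N_0\}$, $N_0$ large), whose differentials span $T^\ast_xM$ at every $x$, so that at a \emph{single} time the fields $\mathrm{grad}_g\psi_s$ already produce, via $\mathbb{P}(\sum_s c_s\mathrm{grad}_g\psi_s)$ with scalar $c_s$, any solenoidal field; trigonometric approximation then handles the spatial dependence of $R[u_*]$. The obstacle --- and the hard part of the whole argument --- is that the coefficients $c_s$ must be \emph{values of functions transported by $u_*$}, which couples the construction across different times. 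I expect this to be resolved either (a) by taking $U$ a neighbourhood of a base flow whose transport is explicitly understood --- e.g.\ conjugate to a Diophantine translation in suitable coordinates, so trigonometric monomials transport to themselves up to a computable time--dependent phase and the matching over $[0,T]$ becomes a finite Fourier computation --- or (b) by an inductive jet construction: append finitely many vertical dimensions at each step so that $\mathbb{P}\big(-\tfrac12\sum_s(w^s\!\circ\!\Phi_{u_*}^{-t})^2\mathrm{grad}_g\psi_s\big)$ matches $R[u_*]$ to one higher order in $t$ at $t=0$ (each step solving, up to a gradient, a prescribed equation polynomial in the data and in $u_*(0,\cdot)$ and their derivatives) while keeping uniform control of all spatial norms. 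Granting that the transport constraint does not obstruct the matching, letting $\delta\to0$ gives sequences in $\mathcal{E}$ converging to each $u_*\in U$ in the smooth topology, so $U\subseteq\overline{\mathcal{E}}$, i.e.\ $\mathcal{E}$ is somewhere dense.
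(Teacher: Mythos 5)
Your reduction of extendibility to the Boussinesq-type system (transported scalars $w^s=\tilde g_{ss}\tilde u^s$, forcing $\tfrac12\sum_s(\tilde u^s)^2\nabla\tilde g_{ss}$, disposal of the volume constraint by appending one more fibre) is correct and is essentially the paper's Proposition \ref{lop}. The problems are in how you use it in both parts.

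For part (i), the decisive inference fails. The set of residuals attainable with $m\le n$ and data bounded in $C^n$ by $n$ is indeed precompact in a \emph{weaker} norm (say $C^{n-1}$, by Arzel\`a--Ascoli), but a set that is compact in a weak norm can perfectly well contain a ball of a stronger norm: any $C^K$-bounded ball with $K\ge n$ is itself precompact in $C^{n-1}$. To contradict density of $\mathcal{E}_n$ you must exclude balls of the smooth (Fr\'echet) topology, whose defining seminorms are of arbitrarily high order compared with $n$, so ``image of a precompact family, hence contains no ball of solenoidal fields'' does not follow. Worse, nothing in your sketch genuinely uses $d\ge 3$: the covering claim for $v\mapsto R[u+v]$ is obtained by solving a forced Euler equation and is just as true when $d=2$, so if your argument were valid it would settle the $d=2$ case, which the paper explicitly leaves open. (Your closedness claim for $\mathcal{E}_n$ is also shaky, since limits of data bounded in $C^n$ need only be $C^{n-1}$, while extendibility requires smooth data.) The paper avoids all of this by a finite-dimensional count: localize at a point, project onto degree-$N$ Taylor jets; incompressible jets form a space of dimension $(d-1+o(1))\frac{N^{d+1}}{(d+1)!}$, whereas jets of flows obeying \eqref{eq-5}--\eqref{eq-6} are polynomial images of only $(1+o(1))\frac{N^{d+1}}{(d+1)!}$ parameters because all time derivatives are slaved to spatial data; $d\ge3$ is exactly what makes the latter smaller, giving a nowhere dense subvariety.

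For part (ii), you correctly isolate the crux --- the forcing must be a finite sum of products of Eulerian factors $\mathrm{grad}_g\psi_s$ with coefficients that are transported (Lagrangian) scalars --- but you then assume it away (``Granting that the transport constraint does not obstruct the matching''), and neither of your suggested routes is carried out or likely to work as stated: nearby flows are not conjugate to a Diophantine translation, the self-transport of trigonometric monomials is destroyed by the very forcing being added, and matching jets at $t=0$ only controls $u-u_*$ for short times without a further argument. The paper's mechanism is concrete and is the actual content of (ii): take $U$ to be the flows obeying the drift condition \eqref{mov}; then for each $i$ the map $(t,x)\mapsto(A(t,x),x^i)$ is a diffeomorphism onto the closure of a smooth domain in $(\R/\Z)^d\times(\R/\Z)$, so the material derivative of $u^\flat$ can be written exactly as $-F_i(A(t,x),x^i)$ (after a Seeley extension), and Fourier expansion approximates each $F_i$ in the smooth topology by separated sums $\sum_s f_{i,s}(a)h_{i,s}(x^i)$; each such term is realizable by taking $\rho_{i,s}=f_{i,s}\circ A$ (transported) and warping factor $H_{i,s}(x^i)$ with $H_{i,s}'=h_{i,s}$, the displacement bound from \eqref{mov} allowing $h_{i,s}$ to be normalized to mean zero (Corollary \ref{sala}). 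Your final step (solve the forced Euler-plus-transport system with the approximated forcing and the same data, and show smooth convergence) does match the paper's Stability theorem in spirit, but without the approximation mechanism above the proof of (ii) is incomplete at its central point.
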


Part (i) of the theorem asserts that (in high dimension) the problem of extending a given incompressible flow $u$ to an Euler flow $\tilde u$ is overdetermined (despite the ability to prescribe any number of additional warping factors and swirl coefficients); nevertheless, part (ii) asserts (when the manifold is topologically a torus) that the approximate version of problem, in which one is allowed to first perturb the flow slightly before extending it, becomes underdetermined, at least if the flow lies in some non-empty open set.  

Our proof of Theorem \ref{main}(i) essentially proceeds by counting degrees of freedom.  Heuristically, the point is that an arbitrary incompressible flow $u$ is essentially determined by $d-1$ independent functions of space and time, whereas the warping factors $\tilde g_{ss}$ are functions of space only, the pressure field is one function of space and time, and the swirl fields $u^s$ are technically functions of both space and time, but have the same number of degrees of freedom as a function just of space, because they solve an evolution equation.  When $d>2$, this means that there are fewer unknown functions of space and time than prescribed functions of space and time, which is the source of the generic inextendibility.  This simple argument breaks down when $d=2$, but we do not know whether the claim is actually false in this case.

The proof of Theorem \ref{main}(ii) proceeds by direct calculation of the effect of the warping factors and swirl velocities, which effectively create a forcing term (of Boussinesq type) in the first equation of \eqref{euler} that is a combination of functions of the Eulerian spatial coordinates $x^i$ (coming from the warping factors) and the Lagrangian spatial coordinates $a^\beta$ (which arise from the swirl velocities, which are passively transported by the flow).  In a non-empty open subset of ${\mathcal F}$, the combination of these coordinates becomes a non-degenerate set of coordinates for spacetime, and one can then use the Stone-Weierstrass theorem to conclude.  The requirement that $M$ be topologically a torus is a technical hypothesis in order to avoid topological obstructions such as the hairy ball theorem, but it may be that the hypothesis can be dropped (and it may in fact be true, in the $M = (\R/\Z)^d$ case at least, that ${\mathcal E}$ is dense in all of ${\mathcal F}$, not just in a non-empty open subset).

One particular consequence of Theorem \ref{main}(ii) is that flows $u$ on the torus that violate the known conservation laws of the Euler equation, such as energy, momentum, circulation, or helicity, can still be approximately \emph{extended} to Euler flows, despite the fact that these conservation laws prevent $u$ from being approximated \emph{directly} by an Euler flow.  In the case of conservation of the energy
$$ \frac{1}{2} \int_M g_{\ir \jr} u^\ir u^\jr\ dg,$$
this is not a contradiction, because in the extended manifold $\tilde M$, the ``swirl'' coefficients $\tilde u^s$ can exchange an arbitrary amount of energy with the horizontal components $u^i$ of the flow.  In the case of conserved momenta of the form
$$ \int_M g_{\ir \jr} X^\ir u^\jr\ dg,$$
where $X$ is a (time-independent) Killing vector field of $M$, this is again not a contradiction, because the vector field $X$ need not be a Killing vector field of the extension $\tilde M$.  In the case of conservation of circulation
$$ \int_\gamma g_{\ir \jr} u^\ir ds^\jr,$$
where $ds$ denotes the line element on a loop $\gamma$ in $M$ that is transported by the flow, one again avoids contradiction, because the extended flow $\tilde u$ will distort (a lift of) the loop $\gamma$ in the ``vertical'' directions $\frac{\partial}{\partial \theta^s}$, allowing for the swirl components $\tilde u^s$ to contribute non-trivial amounts to the conserved circulation.  Finally, in the three-dimensional case $d=3$, conservation of helicity
$$ \int_M u^\flat \wedge du^\flat $$
also does not contradict Theorem \ref{main}, because the extended manifold $\tilde M$ has more than three dimensions and thus does not conserve helicity\footnote{There are higher-order analogues of helicity for such manifolds in both odd and even dimensions, see \cite{serre}.  However, these higher invariants are not directly related to the three-dimensional helicity on the base manifold $M$, and in any event involve the swirl coefficients $\tilde u^s$ in a non-trivial manner.}.  Thus we see that the freedom to add additional dimensions to the flow greatly increases the flexibility of the Euler dynamics, by removing all constraints except for incompressibility, which in the author's opinion supports the potential universality of such dynamics, which could in particular lead to a positive resolution to Conjecture \ref{conj}. The situation here is somewhat reminiscent of that in Kaluza-Klein theory or string theory in physics, in which one can in principle model various laws of physics in terms of simpler laws in higher dimensions by postulating the existence of additional compact spatial dimensions.  

The author is supported by NSF grant DMS-1266164 and by a Simons Investigator Award.  The author also thanks the anonymous referees for helpful corrections and suggestions.

\section{Expressing the Euler equations in coordinates}

To begin the proof of Theorem \ref{main}, we first transform the Euler equations on $M$ (and on extensions $\tilde M$) into a form that is convenient for calculations, by removing the need to explicitly work with Christoffel symbols.

Let $(M,g)$ be a compact Riemannian manifold.
Suppose that $u: [0,T] \to \Gamma(TM)$ is an Euler flow on $M$ with associated pressure field $p: [0,T] \to C^\infty(M)$.  From \eqref{euler-diff} we have
\begin{equation}\label{vo}
 \partial_t u_\ir + u^\jr \omega_{\jr \ir}= - \nabla_\ir p' 
\end{equation}
where $p': [0,T] \to C^\infty(M)$ is the modified pressure
$$ p' \coloneqq p + \frac{1}{2} u^\jr u_\jr$$
and
the vorticity two-form $\omega: [0,T] \to \Omega^2(TM)$ is defined as the exterior derivative $\omega = du^\flat$ of the velocity one-form $u^\flat$, thus in local coordinates
\begin{equation}\label{om-def}
 \omega_{ji} = \partial_j u_i - \partial_i u_j
\end{equation}
and in abstract index notation	
\begin{equation}\label{om-def-1}
 \omega_{\jr \ir} = \nabla_\jr u_\ir - \nabla_\ir u_\jr
\end{equation}
(here we use the symmetry $\Gamma^k_{ij} = \Gamma^k_{ji}$, which reflects the torsion-free nature of the Levi-Civita connection).  Of course, in local coordinates the derivative $\nabla_i p'$ appearing in \eqref{vo} may be written as $\partial_i p'$.

Now suppose instead that $u: [0,T] \to \Gamma(TM)$ is a flow on $M$ that extends to an Euler flow $\tilde u: [0,T] \to \Gamma(T\tilde M)$ on an extended manifold $\tilde M = M \times (\R/\Z)^m$, with an associated pressure field $\tilde p$.  From \eqref{tux}, the extended velocity one-form $\tilde u^\flat: [0,T] \to \Omega^1(\tilde M)$ is then given in local coordinates by
$$ \tilde u^\flat(t,(x,\theta)) = u_i(t,x) dx^i + \sum_{s=1}^m \tilde u_s(x) d\theta^s$$
and the extended vorticity $\tilde \omega: [0,T] \to \Omega^2(\tilde M)$ is then given by in local coordinates by
$$ \tilde \omega(t,(x,\theta)) = \frac{1}{2!} \omega_{ji}(t,x) dx^j \wedge dx^i + 
\sum_{s=1}^m \partial_i \tilde u_s(x) dx^i \wedge d\theta^s$$
where $u_i, \omega_{ji}$ are of course given by \eqref{vi-def}, \eqref{om-def}, and
\begin{equation}\label{tva}
 \tilde u_s :=\tilde g_{ss} \tilde u^s.
\end{equation}
Applying the equation \eqref{vo} (in local coordinates) for $\tilde M$, we thus obtain the system
\begin{align*}
\partial_t u_i + u^j \omega_{ji} - \sum_{s=1}^m \tilde u^s \partial_i \tilde u_s &= - \partial_i \tilde p'  \\
\partial_t \tilde u_s + u^j \partial_j \tilde u_s &= \frac{\partial}{\partial \theta_s} \tilde p'
\end{align*}
for some smooth $\tilde p': [0,T] \to C^\infty(M)$.  In particular, the field $\frac{\partial}{\partial \theta_s} \tilde p'$ is independent of $\theta_s$; since it also has mean zero in the $\theta_s$ direction, we conclude that it must vanish (and that $\tilde p'$ is just a function of $t,x$ and not $\theta$).
Reverting back to abstract index notation and using \eqref{om-def-1} and \eqref{tva} we thus arrive at the system
\begin{align}
\partial_t u_\ir + u^\jr \nabla_\jr u_\ir - u^\jr \nabla_\ir u_\jr - \sum_{s=1}^m \tilde g^{ss} \tilde u_s \nabla_\ir \tilde u_s &= - \nabla_\ir \tilde p'  \label{eq-1} \\
\partial_t \tilde u_s + u^\jr \nabla_\jr \tilde u_s &= 0 \label{eq-2}
\end{align}
where $\tilde g^{s s} := \tilde g_{s s}^{-1}$.  Here it is perhaps worth stressing that $\nabla$ denotes the Levi-Civita connection on $M$ rather than on $\tilde M$ (we will not use the latter any further in this paper).

\begin{remark}  The equation \eqref{eq-2} can be regarded as the conservation law for the circulation for a loop in the $\theta^s$ coordinate.  The scalar field $\tilde v_s$, which is transported by the base flow $u$ thanks to \eqref{eq-2}, is the analogue of the quantity $r \tilde u^\theta$ appearing in Example \ref{exo}.
\end{remark}

The above manipulations are all reversible, allowing us to characterise the flows that extend to Euler flows:

\begin{proposition}  Let $(M,g)$ be a compact Riemannian manifold, and let $u: [0,T] \to \Gamma(TM)$ be an incompressible flow on $M$. Then the following are equivalent.
\begin{itemize}
\item[(i)]  $u$ extends to an Euler flow.
\item[(ii)]  There  exists $m \geq 0$, positive smooth functions $\tilde g^{ss} \in C^\infty(M)$ and smooth functions $\tilde u_s: [0,T] \to C^\infty(M)$ for $s=1,\dots,m$, and a further smooth function $\tilde p': [0,T] \to C^\infty(M)$ such that the equations \eqref{eq-1}, \eqref{eq-2} hold, as well as the volume condition
\begin{equation}\label{alpha1m}
 \prod_{s = 1}^m g^{ss} = 1.
\end{equation}
\end{itemize}
\end{proposition}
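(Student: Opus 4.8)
The plan is to observe that the entire chain of manipulations carried out between equation~\eqref{vo} and the statement of the proposition is reversible, so the proof is largely bookkeeping: the substantive work was the forward computation already performed, and what remains is to package the ``reverse'' implication cleanly and to check that the data furnished in (ii) genuinely reassembles into an extension.

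\textbf{From (i) to (ii).} Suppose $u$ extends to an Euler flow $\tilde u$ on some extension $\tilde M = M \times (\R/\Z)^m$ with warping factors $\tilde g_{ss} \in C^\infty(M)$, swirl coefficients $\tilde u^s$, and pressure $\tilde p$. I would set $\tilde g^{ss} := \tilde g_{ss}^{-1}$, $\tilde u_s := \tilde g_{ss}\tilde u^s$ (so that \eqref{tva} holds), and let $\tilde p'$ be the modified pressure on $\tilde M$, a priori a function of $(t,x,\theta)$. The computation preceding the proposition then shows: the extended velocity one-form and vorticity on $\tilde M$ take the stated local-coordinate form; applying the vorticity form \eqref{vo} of the Euler equations on $\tilde M$ and separating horizontal and vertical components yields the two-line local-coordinate system; the vertical equation forces $\partial_{\theta^s}\tilde p'$ to be both $\theta^s$-independent and mean-zero in $\theta^s$, hence zero, so $\tilde p'$ depends only on $(t,x)$ and the vertical equations collapse to \eqref{eq-2}; and the horizontal equations become \eqref{eq-1} upon reverting to abstract index notation. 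Finally \eqref{alpha1m} is just \eqref{pag} rewritten in terms of $\tilde g^{ss}$, and smoothness of all the new data is inherited from $\tilde M$.

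\textbf{From (ii) to (i).} Conversely, given $m$, positive $\tilde g^{ss} \in C^\infty(M)$, $\tilde u_s: [0,T]\to C^\infty(M)$ and $\tilde p': [0,T]\to C^\infty(M)$ satisfying \eqref{eq-1}, \eqref{eq-2}, \eqref{alpha1m}, I would reconstruct the extension explicitly. Put $\tilde g_{ss} := (\tilde g^{ss})^{-1}$, which is smooth and positive and satisfies \eqref{pag} by \eqref{alpha1m}, so that $\tilde M = M \times (\R/\Z)^m$ with the associated warped-product metric is a genuine extension of $M$. Put $\tilde u^s := \tilde g^{ss}\tilde u_s$ and let $\tilde u$ be the extension \eqref{tux} of $u$ with these swirl coefficients; since $u$ is incompressible, so is $\tilde u$, by the reversible integration-by-parts argument recalled in the introduction. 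Finally set $\tilde p := \tilde p' - \tfrac12 \tilde u^\jr \tilde u_\jr$, the norm being taken in the metric $\tilde g$, which is a smooth function of $(t,x)$. Since passing from the momentum equation in \eqref{euler-diff} to the vorticity form \eqref{vo} uses only metric compatibility $\nabla\tilde g = 0$ and the definition of the modified pressure, and is therefore reversible, it suffices to check the analogue of \eqref{vo} on $\tilde M$. But in local coordinates the left-hand side of that equation splits into precisely the horizontal and vertical expressions appearing in the pre-proposition system, whose horizontal part equals $-\partial_i \tilde p'$ by \eqref{eq-1} and whose vertical part equals $0 = \partial_{\theta^s}\tilde p'$ by \eqref{eq-2} together with the $\theta$-independence of $\tilde p'$. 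Hence the analogue of \eqref{vo}, and therefore \eqref{euler}, holds on $\tilde M$, so $u$ extends to an Euler flow.

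\textbf{Anticipated difficulty.} There is no deep obstacle: the argument reverses computations already carried out. The only points requiring genuine care are in the reconstruction step of (ii)$\Rightarrow$(i): one must verify that the data of (ii) assembles into the \emph{warped-product} structure required of an extension (this is exactly where \eqref{alpha1m}, equivalently \eqref{pag}, is needed), and that the reconstructed pressure $\tilde p$ is a globally defined smooth function on $[0,T]\times\tilde M$ rather than merely on a coordinate patch --- which holds because $\tilde p'$ was shown to be globally defined on $[0,T]\times M$ and the warping factors are global. It is also worth confirming that the local-coordinate identities $u^i\nabla_i\phi = u^i\partial_i\phi$ on scalars and $\omega_{ji} = \partial_j u_i - \partial_i u_j$ (torsion-freeness of the Levi-Civita connection) are invoked consistently in both directions, but this is routine.
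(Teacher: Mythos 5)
Your proposal is correct and matches the paper's approach: the paper establishes this proposition simply by performing the forward computation (extended vorticity, splitting \eqref{vo} on $\tilde M$ into horizontal and vertical parts, and eliminating $\partial_{\theta^s}\tilde p'$) and remarking that ``the above manipulations are all reversible,'' which is precisely the two-way bookkeeping you carry out. Your explicit reconstruction in the (ii)$\Rightarrow$(i) direction, including recovering $\tilde p$ from $\tilde p'$ and checking \eqref{pag} via \eqref{alpha1m}, is exactly the intended reversal.
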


We now simplify the description (ii) of flows extendible to Euler flows given by the above proposition. First, we observe (as in \cite[\S 2.3]{tao-euler-univ}) that we may eliminate the volume condition \eqref{alpha1m}, since if we remove this condition then we may reinstate it by defining
$$ \tilde g^{m+1,m+1} \coloneqq\left(\prod_{s=1}^m \tilde g^{ss}\right)^{-1}	$$
and
$$ \tilde u_{m+1} \coloneqq 0$$
and then replacing $m$ with $m+1$.
Next, by making the substitution
$$ \rho_s := \frac{1}{2} (\tilde u_s)^2$$
and noting that $u^\jr \nabla_\ir u_\jr = \nabla_\ir(\frac{1}{2} u^\jr u_\jr)$,
we see that whenever \eqref{eq-1}, \eqref{eq-2} hold, one has the system
\begin{align}
\partial_t u_\ir + u^\jr \nabla_\jr u_\ir  - \sum_{s=1}^m \tilde g^{s s} \nabla_\ir \rho_s &= - \nabla_\ir \tilde p''  \label{eq-3} \\
\partial_t \rho_s + u^\jr \nabla_\jr \rho_s &= 0\label{eq-4}
\end{align}
for some smooth $\tilde p'': [0,T] \to C^\infty(M)$.
Conversely, if one has a positive smooth function $\tilde g^{ss} \in C^\infty(M)$ and a smooth function $\rho_s: [0,T] \to C^\infty(M)$ for each $s=1,\dots,m$, and a smooth $\tilde p'': [0,T] \to C^\infty(M)$, then by adding a sufficiently large constant to each $\rho_s$ we may assume that $\rho_s$ is everywhere positive, and by setting $\tilde u_s := \sqrt{2 \rho_s}$ we obtain a solution to \eqref{eq-1}, \eqref{eq-2} for a suitable $\tilde p'$.  Thus we may replace the system \eqref{eq-1}, \eqref{eq-2} by \eqref{eq-3}, \eqref{eq-4} (and replace the unknown fields $\tilde u_s, \tilde p'$ by $\rho_s, p''$).

Next, we can drop the hypothesis that each $\tilde g^{ss}$ is positive, since if this is not the case, one can add a large constant to each $\tilde g^{ss}$ (and add a constant multiple of $\rho_s$ to $\tilde p''$) without affecting \eqref{eq-3}.

By using the Leibniz identity
$$ \tilde g^{s s} \nabla_\ir \rho_s = -\rho_s \nabla_\ir \tilde g^{s s} + \nabla_\ir( \tilde g^{s s} \rho_s)$$
one can rewrite the system \eqref{eq-3}-\eqref{eq-4} as the equivalent Boussinesq-type system
\begin{align}
\partial_t u_\ir + u^\jr \nabla_\jr u_\ir + \sum_{s=1}^m \rho_s \nabla_\ir \tilde g^{s s} &= - \nabla_\ir \tilde p''' \label{eq-5}\\
\partial_t \rho_s + u^\jr \partial_\jr \rho_s &= 0\label{eq-6}
\end{align}
for some smooth $\tilde p''': [0,T] \to C^\infty(M)$.  
We summarise the above discussion as

\begin{proposition}\label{lop}  Let $(M,g)$ be a compact Riemannian manifold, and let $u: [0,T] \to \Gamma(TM)$ be an incompressible flow on $M$.  Then the following are equivalent.
\begin{itemize}
\item[(i)]  $u$ extends to an Euler flow.
\item[(iii)]  There  exists $m \geq 0$, smooth functions $\tilde g^{ss} \in C^\infty(M)$ and $\rho_s: [0,T] \to C^\infty(M)$ for $s=1,\dots,m$, and a further smooth function $\tilde p''': [0,T] \to C^\infty(M)$ such that the equations \eqref{eq-5}, \eqref{eq-6} hold.
\end{itemize}
\end{proposition}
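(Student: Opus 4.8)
The plan is to assemble the chain of reversible manipulations carried out in the discussion above into a single equivalence, so most of the work is already done. The preceding (unnumbered) Proposition of this section already gives that (i) is equivalent to statement (ii), namely the existence of $m \geq 0$, positive smooth $\tilde g^{ss} \in C^\infty(M)$, and smooth $\tilde u_s \colon [0,T] \to C^\infty(M)$ for $s = 1,\dots,m$ satisfying \eqref{eq-1}, \eqref{eq-2} together with the volume condition \eqref{alpha1m}. Hence it suffices to prove that (ii) is equivalent to (iii).

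For the direction (ii) $\Rightarrow$ (iii) I would take data as in (ii), set $\rho_s := \frac{1}{2}(\tilde u_s)^2$, and use the two identities $u^\jr \nabla_\ir u_\jr = \nabla_\ir(\frac{1}{2} u^\jr u_\jr)$ and $\tilde g^{ss}\tilde u_s \nabla_\ir \tilde u_s = \tilde g^{ss} \nabla_\ir \rho_s$ to absorb one term of \eqref{eq-1} into the pressure, obtaining \eqref{eq-3}, \eqref{eq-4} with a modified pressure $\tilde p''$; then the Leibniz identity $\tilde g^{ss}\nabla_\ir \rho_s = -\rho_s \nabla_\ir \tilde g^{ss} + \nabla_\ir(\tilde g^{ss}\rho_s)$ absorbs one more term, yielding \eqref{eq-5}, \eqref{eq-6}. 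The volume condition \eqref{alpha1m} is simply forgotten, since it is not part of (iii).

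For the direction (iii) $\Rightarrow$ (ii) I would run this backwards. Starting from \eqref{eq-5}, \eqref{eq-6}, reversing the Leibniz step recovers \eqref{eq-3}, \eqref{eq-4}. Because $M$ is compact and $T < \infty$, every $\tilde g^{ss}$ and every $\rho_s$ is bounded, so adding a sufficiently large constant to each $\tilde g^{ss}$ renders it positive while changing \eqref{eq-3} only by a constant multiple of $\nabla_\ir \rho_s$ (absorbed into the pressure by adding a constant multiple of $\rho_s$), and then adding a sufficiently large constant to each $\rho_s$ renders it positive without altering \eqref{eq-3} or \eqref{eq-4} at all. Setting $\tilde u_s := \sqrt{2\rho_s}$ (smooth since $\rho_s > 0$) and returning the term $u^\jr \nabla_\ir u_\jr$ to the pressure recovers \eqref{eq-1}, \eqref{eq-2} for a suitable $\tilde p'$. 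Finally I would reinstate \eqref{alpha1m} by appending one vertical coordinate: put $\tilde g^{m+1,m+1} := (\prod_{s=1}^m \tilde g^{ss})^{-1}$ and $\tilde u_{m+1} := 0$ and replace $m$ by $m+1$; the new summand in \eqref{eq-1} vanishes, and now $\prod_{s=1}^{m+1}\tilde g^{ss} = 1$. This gives (ii), hence (i).

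I do not expect a genuine obstacle here. The only points needing care are bookkeeping: at each reduction the pressure is modified only by the horizontal gradient of a smooth function of $(t,x)$, which is always harmless, and the positivity constraints on $\tilde g^{ss}$ and $\rho_s$ that are present in (ii) but dropped in (iii) can always be restored by the ``add a large constant'' device precisely because $M$ is compact and $T$ is finite. The actual content, namely the identification of $(\tilde g^{ss},\rho_s,\tilde p''')$ and the Boussinesq form \eqref{eq-5}--\eqref{eq-6} as the right reduced description, has already been extracted in the paragraphs preceding the statement.
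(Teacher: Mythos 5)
Your argument is correct and is essentially the paper's own: the paper proves Proposition \ref{lop} by exactly this chain of reversible reductions presented as a running discussion --- the unnumbered equivalence of (i) with (ii), dropping/reinstating the volume condition \eqref{alpha1m} via the appended coordinate $\tilde g^{m+1,m+1} = (\prod_{s=1}^m \tilde g^{ss})^{-1}$, $\tilde u_{m+1}=0$, the substitution $\rho_s = \tfrac12(\tilde u_s)^2$ with the add-a-large-constant device (using compactness of $M$ and finiteness of $T$) to invert it, dropping positivity of $\tilde g^{ss}$ by another constant shift absorbed into the pressure, and the Leibniz identity passing between \eqref{eq-3}--\eqref{eq-4} and \eqref{eq-5}--\eqref{eq-6}. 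You have simply assembled the same steps into an explicit two-directional proof, and all the bookkeeping points you flag (pressure modifications by gradients of smooth functions of $(t,x)$, positivity restorations) are handled the same way in the paper.
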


\section{Generic inextendibility}

We can now prove Theorem \ref{main}(i).  Fix $(M,g)$ with $d \geq 3$, and let ${\mathcal E}, {\mathcal F}$ be the sets in Theorem \ref{main}.  From Proposition \ref{lop} one has
$$ {\mathcal E} = \bigcup_{m=0}^\infty {\mathcal E}_m$$
where ${\mathcal E}_m$ is the set of all incompressible flows $u \in {\mathcal F}$ for which there exist smooth functions $g^{ss} \in C^\infty(M)$ and $\rho_s: [0,T] \to C^\infty(M)$ for $s=1,\dots,m$, and a further smooth function $\tilde p''': [0,T] \to C^\infty(M)$ obeying the equations \eqref{eq-5}, \eqref{eq-6}.  It will suffice to show that each ${\mathcal E}_m$ is nowhere dense in ${\mathcal F}$.

Fix $m$, and let $N$ be a sufficiently large natural number (depending on $m$).  We now use Taylor expansion to rigorously count ``degrees of freedom'' in the extension problem.  Let $0$ be a point in $M$, and let $x^1,\dots,x^d$ be a system of local coordinates around $0$.  By replacing $x^d$ with some suitable function $\phi(x^1,\dots,x^d)$ if necessary, we may assume that the volume form in local coordinates is the standard volume form $dx^1 \wedge \dots \wedge dx^d$, so that the divergence-free condition in local coordinates is simply $\partial_i u^i = 0$.
 Let $V_N$ be the vector space of polynomials in $t,x^1,\dots,x^d$ (with real coefficients) of degree at most $N$; this space has dimension
$$ \mathrm{dim} V_N = \binom{N+d+1}{d+1} = (1 + o(1)) \frac{N^{d+1}}{(d+1)!}$$
where $o(1)$ denotes a quantity that goes to zero as $N \to \infty$ (holding all other parameters fixed).  For any smooth $p: [0,T] \to C^\infty(M)$, we see from Taylor expansion that there is a unique polynomial $\pi_N(p) \in V_N$ such that
$$ p(t,x) = \pi_N(p)(t,x) + O( (|t| + |x|)^{N+1} )$$
as $(t,x) \to (0,0)$.  In a similar fashion, given a flow $u: [0,T] \to \Gamma(TM)$, which we write in local coordinates near $x=0$ as $(u^1,\dots,u^d)$, there is a unique tuple $\pi_N(u) = (\pi_N(u)^1,\dots,\pi_N(u)^d) \in V_N^d$ such that
$$ u(t,x) = \pi_N(u)(t,x) + O( (|t| + |x|)^{N+1} )$$
as $(t,x) \to (0,0)$.  By differentiating Taylor series term by term, we see that if $u$ is incompressible, then $\pi_N(u)$ is divergence-free, thus in local coordinates:
$$ \partial_i \pi_N(u)^i = 0.$$
Thus, if we let $W_N$ be the set of tuples in $V_N^d$ that are divergence-free, then $\pi_N$ is a linear map from ${\mathcal F}$ to $W_N$.  We claim that this map is surjective: given any tuple $(P^1,\dots,P^d) \in W_N$, there exists an incompressible flow $u$ with
\begin{equation}\label{upi}
 u^i(t,x) = P^i(t,x) + O( (|t| + |x|)^{N+1} ).
\end{equation}
as $(t,x) \to (0,0)$.  Indeed, by inverting the Laplacian on the space of polynomials, we can find polynomials $Q_{ij} \in P_{N+1}$ with $Q_{ij} = -Q_{ji}$ and (in local coordinates)
$$ \Delta Q_{ij}(t,x) = \partial_j P^i(t,x) - \partial_i P^j(t,x)$$
for $x$ near $0$.  By applying a smooth cutoff, we can then construct smooth maps $q_{ij}: [0,T] \to C^\infty(M)$ supported in a neighbourhood of $x=0$ such that $q_{ij} = -q_{ji}$ and
$$ q_{ij}(t,x) = Q_{ij}(t,x) + O( (|t| + |x|)^{N+2} )$$
as $(t,x) \to (0,0)$; taking divergences we thus see that the flow $u$ defined in local coordinates as
$$ u^i(t,x) := \sum_{j=1}^d \partial_j q_{ij}(t,x) $$
for $i=1,\dots,d$ and $x$ near zero (and $u$ vanishing away from zero) is incompressible and obeys \eqref{upi}.  As $W_N$ is finite dimensional, we conclude that the map $\pi_N: {\mathcal F} \to W_N$ has a continuous linear right inverse from $W_N$ to ${\mathcal F}$ in the smooth topology.  As a consequence, to show that ${\mathcal E}_m$ is nowhere dense in ${\mathcal F}$, it suffices to show that $\pi_N({\mathcal E}_m)$ is nowhere dense in $W_N$.

We will achieve this by a dimension count.  The space $V_N^d$ has dimension $(d + o(1)) \frac{N^{d+1}}{(d+1)!}$, and the condition of being incompressible imposes $(1 + o(1)) \frac{N^{d+1}}{(d+1)!}$ linear conditions; thus
$$ \mathrm{dim} W_N = (d-1 + o(1)) \frac{N^{d+1}}{(d+1)!}.$$
Now suppose that $u \in {\mathcal E}_m$, then there are smooth functions $\tilde g^{ss} \in C^\infty(M)$ and $\rho_s: [0,T] \to C^\infty(M)$ for $s=1,\dots,m$, and a further smooth function $\tilde p''': [0,T] \to C^\infty(M)$ obeying the equations \eqref{eq-5}, \eqref{eq-6}.  By Taylor expansion of \eqref{eq-5}, \eqref{eq-6} and comparing coefficients, we see that the derivatives
$$ \partial_t^{j_0} \partial_1^{j_1} \dots \partial_d^{j_d} u^k(0,\dots,0)$$
and
$$ \partial_t^{j_0} \partial_1^{j_1} \dots \partial_d^{j_d} \rho_s(0,\dots,0)$$
with $k=1,\dots,d$, $s=1,\dots,m$ and $j_0+\dots+j_d \leq N$ can be expressed as an explicit (but complicated) polynomial combination of  the derivatives
$$ \partial_1^{j_1} \dots \partial_d^{j_d} u^k(0,\dots,0)$$
$$ \partial_1^{j_1} \dots \partial_d^{j_d} \rho_s(0,\dots,0)$$
$$ \partial_1^{j_1} \dots \partial_d^{j_d} \tilde g^{s s}(0,\dots,0)$$
with $k=1,\dots,d$, $s=1,\dots,m$, and $j_1+\dots+j_d \leq N+1$, as well as the derivatives
$$ \partial_t^{j_0} \partial_1^{j_1} \dots \partial_d^{j_d} \tilde p'''(0,\dots,0)$$
with $j_0+\dots+j_d \leq N+1$.  The number of these derivative parameters may be crudely bounded by 
$$O((N+1)^d) + \binom{N+d+2}{d+1} = (1 + o(1)) \frac{N^{d+1}}{(d+1)!},$$
where we allow implied constants in the $O()$ notation to depend on $d$.
As a consequence, we conclude that $\pi_N({\mathcal E}_m)$ is contained in the polynomial image of $\R^M$ for some $M = (1 + o(1)) \frac{N^{d+1}}{(d+1)!}$, and is thus contained in an algebraic subvariety of $W_N$ of dimension at most $M$.  Since $d \geq 3$, this dimension is strictly less than that of $W_N$ if $N$ is large enough, and hence $\pi_N( {\mathcal E}_m)$ is nowhere dense as claimed.

\section{Non-rigidity}

We now prove Theorem \ref{main}(ii).  Fix $T>0$.  Let $M = (\R/\Z)^d$, thus we now have global coordinates $u^1,\dots,u^d: [0,T] \times (\R/\Z)^d \to \R$ for the flows in ${\mathcal F}$, and global coordinates $x^1,\dots,x^d \in \R/\Z$ for the torus $M$.  Let $U$ denote the set of all flows $u = (u^1,\dots,u^d) \in {\mathcal F}$ obeying the pointwise bounds
\begin{equation}\label{mov}
 \frac{1}{8T} < u^j(t,x) < \frac{1}{4T}
\end{equation}
for all $t \in [0,T]$, $j=1,\dots,d$, and $x \in M$; this is clearly a non-empty open subset of ${\mathcal F}$.  It will therefore suffice to show that ${\mathcal E}$ is dense in $U$.

Accordingly, let $u \in U$.  It will suffice to construct a sequence $u^{(n)} \in U$ of incompressible flows converging to $u$ in the smooth topology on $[0,T] \times (\R/\Z)^d$, such that each $u^{(n)}$ is extendible to an Euler flow.

To do this, we use the following corollary of Proposition \ref{lop}:

\begin{corollary}\label{sala}  Let $u \in U$ be an incompressible flow.  Let $A: [0,T] \times (\R/\Z)^d \to (\R/\Z)^d$ be the labels map associated to $u$, by which we mean the unique solution to the ODE
\begin{equation}\label{ode}
 \partial_t A^i(t,x) + u^j(t,x) \partial_j A^i(t,x) = 0; \quad A^i(0,x) = x^i
\end{equation}
in the standard global coordinates; equivalently, if $X: [0,T] \times (\R/\Z)^d \to (\R/\Z)^d$ is the trajectory map defined by solving the ODE
\begin{equation}\label{edo}
 \partial_t X(t,a) = u(t,X(t,a)); \quad X(0,a) = a 
\end{equation}
then (by the method of characteristics) $A(t): (\R/\Z)^d \to (\R/\Z)^d$ is the inverse of $X(t): (\R/\Z)^d \to (\R/\Z)^d$ for each time $t \in [0,T]$.  Suppose that for each $i=1,\dots,d$, we have a smooth map $F_i: (\R/\Z)^d \times (\R/\Z) \to (\R/\Z)^d$ of the form
\begin{equation}\label{fiax}
 F_i( a, x ) = \sum_{s=1}^{m_i} f_{i,s}(a) h_{i,s}(x^i) 
\end{equation}
for some natural number $m_i$ and some smooth functions $f_{i,s} \in C^\infty((\R/\Z)^d)$, $h_{i,s} \in C^\infty(\R/\Z)$ for $s=1,\dots,m_i$, such that one has the equation
\begin{equation}\label{upo}
\partial_t u_i(t,x) + u^j \nabla_j u_i(t,x) + F_i( A(t,x), x^i) = - \nabla_i p(t,x)
\end{equation}
in the standard global coordinates on $[0,T] \times (\R/\Z)^d$ for some smooth function $p: [0,T] \to C^\infty((\R/\Z)^d)$.  Then $u$ is extendible to an Euler flow.
\end{corollary}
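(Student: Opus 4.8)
The plan is to deduce the corollary from Proposition \ref{lop} by writing down explicit warped‑product data. By that proposition it suffices to produce an integer $m\ge 0$, smooth functions $\tilde g^{ss}\in C^\infty(M)$ and $\rho_s\colon[0,T]\to C^\infty(M)$ for $s=1,\dots,m$, and a smooth $\tilde p'''\colon[0,T]\to C^\infty(M)$ solving the Boussinesq‑type system \eqref{eq-5}--\eqref{eq-6}. The guiding observation is that \eqref{eq-6} says precisely that each $\rho_s$ is transported by the base flow $u$, and — by the method of characteristics together with the definition \eqref{ode} of the labels map — a smooth field on $[0,T]\times M$ is transported by $u$ if and only if it is a smooth function of $A(t,x)$. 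Thus the ``Lagrangian'' data $f_{i,s}(A(t,x))$ in \eqref{upo} is exactly the type of field we may legitimately use as a $\rho_s$, while $\tilde g^{ss}$, being time‑independent and defined on $M$ alone, can only contribute an ``Eulerian'' factor $\nabla_i\tilde g^{ss}=\partial_i\tilde g^{ss}$; choosing $\tilde g^{ss}$ to depend on the single coordinate $x^i$ forces this gradient to point purely in the $x^i$ direction, which is why the Eulerian factors $h_{i,s}$ in \eqref{fiax} are allowed to depend only on $x^i$.

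Concretely, I would index the vertical (swirl) coordinates by pairs $(i,s)$ with $1\le i\le d$ and $1\le s\le m_i$, so that $m:=\sum_{i=1}^d m_i$. For each such pair set
\[
\rho_{(i,s)}(t,x):=f_{i,s}(A(t,x)),\qquad \tilde g^{(i,s)(i,s)}(x):=H_{i,s}(x^i),
\]
where $H_{i,s}\in C^\infty(\R/\Z)$ is a primitive of $h_{i,s}$; the one point needing care is that a \emph{periodic} primitive exists, which requires $h_{i,s}$ to have vanishing mean — a normalization one may assume, and which will be automatic in the eventual application where the $h_{i,s}$ are non‑constant trigonometric monomials. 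Since $A$ is smooth in $(t,x)$ — being the inverse of the smooth trajectory map $X$ from \eqref{edo}, by the standard smooth‑dependence theory for ODEs — each $\rho_{(i,s)}$ is smooth, and differentiating $f_{i,s}\circ A$ and invoking \eqref{ode} shows $\partial_t\rho_{(i,s)}+u^j\nabla_j\rho_{(i,s)}=0$, i.e.\ \eqref{eq-6} holds. Next, because $\tilde g^{(i,s)(i,s)}$ depends only on $x^i$, one has $\nabla_k\tilde g^{(i,s)(i,s)}=\delta_{ki}\,h_{i,s}(x^i)$ in the standard global coordinates (the covariant derivative of a scalar being the ordinary one), whence
\[
\sum_{(i,s)}\rho_{(i,s)}\nabla_k\tilde g^{(i,s)(i,s)}=\sum_{s=1}^{m_k}f_{k,s}(A(t,x))\,h_{k,s}(x^k)=F_k(A(t,x),x^k)
\]
by \eqref{fiax}. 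Taking $\tilde p''':=p$, equation \eqref{eq-5} is then exactly the hypothesis \eqref{upo}. Hence both \eqref{eq-5} and \eqref{eq-6} hold, and Proposition \ref{lop} yields that $u$ is extendible to an Euler flow. Note that no positivity of the $\tilde g^{ss}$ nor the volume normalization \eqref{pag} is needed, since these were already eliminated in passing to Proposition \ref{lop}.

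I do not expect any substantial obstacle internal to this argument: once the reduction of Proposition \ref{lop} is available, the corollary amounts to reading off the correct $\rho_s$ and $\tilde g^{ss}$. The only genuinely delicate bookkeeping is the mean‑zero normalization of the $h_{i,s}$ that makes the primitives $H_{i,s}$ descend to the circle; if one wanted to allow $h_{i,s}$ with nonzero mean, one would have to check that the leftover constant‑in‑$x^i$ contribution $\sum_s\bar h_{i,s}f_{i,s}(A(t,x))$ to the $i$‑th component of \eqref{upo} is compatible with the pressure, which is why it is cleaner to normalize at the outset. The real difficulty of Theorem \ref{main}(ii) lies not here but in the complementary step that will follow: verifying the hypotheses of this corollary for a dense family of $u\in U$, i.e.\ producing, after a small perturbation of a given flow, forcings $F_i$ of the separated Eulerian–Lagrangian form \eqref{fiax} for which \eqref{upo} is solvable — which is where the non‑degeneracy of the combined coordinates $(x^i,A)$ and the Stone–Weierstrass theorem will be invoked.
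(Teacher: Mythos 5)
Your construction of $\rho_{(i,s)}(t,x)=f_{i,s}(A(t,x))$, $\tilde g^{(i,s)(i,s)}(x)=H_{i,s}(x^i)$, $\tilde p'''=p$, and the appeal to Proposition \ref{lop} is exactly the paper's concluding step; the genuine gap is in how you dispose of the mean-zero requirement on the $h_{i,s}$. The corollary is stated for arbitrary $h_{i,s}\in C^\infty(\R/\Z)$, and you cannot simply ``assume'' the normalization: it is not part of the hypotheses, and it is not automatic in the intended application either, since the Fourier expansion of a general smooth $F_i(a,x^i)$ on $(\R/\Z)^d\times(\R/\Z)$ contains modes that are constant in $x^i$ (the zero frequency in the $x^i$ variable), i.e.\ terms with $h_{i,s}$ of nonzero mean. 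Nor can the leftover contribution $\sum_s \bar h_{i,s} f_{i,s}(A(t,x))$ be ``checked to be compatible with the pressure'': a transported scalar $f(A(t,x))$ sitting in the $i$-th component of \eqref{upo} is in general not a gradient, and there is no periodic function $\tilde g$ on the torus with $\nabla_i\tilde g$ constant (the coordinate $x^i$ does not descend to $\R/\Z$), so this term cannot be produced by the forcing $\sum_s\rho_s\nabla_i\tilde g^{ss}$ in \eqref{eq-5} without further work.

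The missing idea --- and the only place the hypothesis $u\in U$ enters, a hypothesis your argument never uses --- is a localization in the Lagrangian variable. By \eqref{mov} and \eqref{edo}, trajectories move less than $1/4$ in each coordinate over $[0,T]$, so $A^i(t,x)\in[x^i-1/4,x^i+1/4]$. Decomposing each $f_{i,s}$ by a smooth partition of unity into pieces supported in slabs $\{a: a^i\in I_{i,s}\}$ with $|I_{i,s}|=1/4$, the product $f_{i,s}(A(t,x))\,h_{i,s}(x^i)$ only samples $h_{i,s}$ on the $1/4$-neighbourhood of $I_{i,s}$, an arc of length at most $3/4<1$; hence $h_{i,s}$ may be redefined on the complementary arc to have mean zero without affecting \eqref{upo}. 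Only after this modification does a periodic primitive $H_{i,s}$ exist, and your construction then closes as written. Without this step your argument proves only the special case of mean-zero $h_{i,s}$, which is strictly weaker than the stated corollary and insufficient for the deduction of Theorem \ref{main}(ii).
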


\begin{proof}  By decomposing each $f_{i,s}$ into finitely many pieces using a smooth partition of unity, we may assume that each $f_{i,s}$ is supported in a region of the form $\{ (a^1,\dots,a^d) \in (\R/\Z)^d: a^i \in I_{i,s} \}$ for some interval $I_{i,s} \subset \R/\Z$ of length $\frac{1}{4}$.  From \eqref{edo} and \eqref{mov} we have
$$ X^i(t,a) \in [a^i - 1/4, a^i + 1/4]$$
for all $t \in [0,T]$, $a \in (\R/\Z)^d$, and $i=1,\dots,d$, which on inverting implies that
$$ A^i(t,x) \in [x^i - 1/4, x^i + 1/4]$$
for all $t \in [0,T]$, $x \in (\R/\Z)^d$, and $i=1,\dots,d$.  As a consequence, one can freely modify $h_{i,s}$ outside of the $1/4$-neighbourhood of $I_{i,s}$ (and alter $F$ accordingly) without affecting the equation \eqref{upo}.  In particular, we can arrange matters so that each $h_{i,s}$ has mean zero on $\R/\Z$, and hence we can write $h_{i,s} = H'_{i,s}$ as the derivative of another function $H_{i,s} \in C^\infty(\R/\Z)$.  If we then set
$$ 
\rho_{i,s}(t,x) := f_{i,s}(A(t,x))$$
and
$$ \tilde g^{i,s,i,s}(x) := H_{i,s}(x^i)$$
then we have the equations
\begin{align*}
\partial_t u_i + u^j \nabla_j u_i + \sum_{l=1}^d \sum_{s=1}^{m_l} \rho_{l,s} \nabla_i \tilde g^{l,s,l,s} &= - \nabla_i p \\
\partial_t \rho_{l,s} + u^j \nabla_j \rho_{l,s} &= 0
\end{align*}
in standard global coordinates on $[0,T] \times (\R/\Z)^d$.  The claim then follows from Proposition \ref{lop} (after relabeling the $l,s$ indices).
\end{proof}

If $u \in U$, then from \eqref{mov}, we see that for each $i=1,\dots,d$, the trajectory map $X: [0,T] \times (\R/\Z)^d \to (\R/\Z)^d$ obeys the inequalities
$$ \frac{1}{8T} < \partial_t X^i(t,a) < \frac{1}{4T} $$
in the standard global coordinates for all $t \in [0,T]$ and $a \in (\R/\Z)^d$.  In particular, this implies that the map $(t,a) \mapsto (a, X^i(t,a))$ is an injective immersion from $[0,T] \times (\R/\Z)^d$ to $(\R/\Z)^d \times (\R/\Z)$, and is thus a diffeomorphism between $[0,T] \times (\R/\Z)^d$ and the closure of a smooth domain in $(\R/\Z)^d \times (\R/\Z)$.  Composing this with the labels map $A$, we conclude that the map $(t,x) \mapsto (a, x^i)$ is also a diffeomorphism from $[0,T] \times (\R/\Z)^d$ to the closure of a smooth domain in $(\R/\Z)^d \times (\R/\Z)$.  As the scalar function $-\partial_t u_i - u^j \nabla_j u_i$ (which is the $i$ component of a one-form in the standard global coordinates) is smooth on $[0,T] \times (\R/\Z)^d$, we can thus find\footnote{Here we use a classical extension theorem of Seeley \cite{seeley} to smoothly extend $F_i$ from the closure of the smooth domain to the entirety of $[0,T] \times (\R/\Z)^d$.} a smooth function $F_i: (\R/\Z)^d \times (\R/\Z)$ for each $i=1,\dots,d$ such that
$$ \partial_t u_i(t,x) + u^j \nabla_j u_i(t,x) + F_i( A(t,x), x^i) = 0$$
in standard global coordinates for $(t,x) \in [0,T] \times (\R/\Z)^d$.  If each $F_i$ were of the form \eqref{fiax}, we would now be done by \eqref{sala}.  This is not the case in general; but because $F_i$ is smooth, a Fourier expansion shows that we can write $F_i$ as the limit in the smooth topology of functions $F^{(N)}_i$ that are each of the form \eqref{fiax}.  To finish the proof of Theorem \ref{main}(ii), it will now suffice to establish the following stability result:

\begin{theorem}[Stability]  Let $M = (\R/\Z)^d$ be equipped with a Riemannian metric $g$, let $F: (\R/\Z)^d \times (\R/\Z)^d \to \R^d$ be a smooth map, and let $u: [0,T] \times (\R/\Z)^d \to \R^d$, $p: [0,T] \times (\R/\Z)^d \to \R$, $A: [0,T] \times (\R/\Z)^d \to (\R/\Z)^d$ be a smooth solution to the system 
\begin{align*}
\partial_t u_i(t,x) + u^j \nabla_j u_i(t,x) + F_i( A(t,x), x) &= -\nabla_i p \\
\partial_t A^i(t,x) + u^j \nabla_j A^i(t,x) &= 0 \\
\nabla_i u^i(t,x) &= 0
\end{align*}
with initial condition $A(0,x) = x$.  Let $F^{(N)}:(\R/\Z)^d \times (\R/\Z)^d \to \R^d$ be a sequence of smooth functions that converge to $F$ in the smooth topology.  Then, for $N$ sufficiently large, there exists smooth solutions $u^{(N)}: [0,T] \times (\R/\Z)^d \to \R^d$, $p^{(N)}: [0,T] \times (\R/\Z)^d \to \R$, $A^{(N)}: [0,T] \times (\R/\Z)^d \to (\R/\Z)^d$ be a smooth solution to the system 
\begin{align}
\partial_t u^{(N)}_i(t,x) + (u^{(N)})^j \nabla_j u^{(N)}_i(t,x) + F^{(N)}_i( A^{(N)}(t,x), x) &= -\partial_i p^{(N)} \label{uni}\\
\partial_t (A^{(N)})^i(t,x) + (u^{(N)})^j \nabla_j (A^{(N)})^i(t,x) &= 0 \label{uni-2}\\
\nabla_i (u^{(N)})^i(t,x) &= 0\label{uni-3}
\end{align}
with initial conditions $u^{(N)}(0,x) = u(0,x)$, $A^{(N)}(0,x) = x$, such that $u^{(N)}$ converges to $u$ in the smooth topology on $[0,T] \times (\R/\Z)^d$.
\end{theorem}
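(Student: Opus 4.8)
The plan is to read this as a continuous-dependence statement for a quasilinear evolution system of incompressible-Euler type carrying a transported forcing term: since the reference triple $(u,p,A)$ already exists on the whole interval $[0,T]$, lower semicontinuity of the lifespan will force the perturbed solution to exist on all of $[0,T]$ as well, and a Gr\"onwall estimate for the difference will pin it close to $(u,p,A)$ in the smooth topology. First I would normalize the label variable: because $A(t,\cdot)$ is isotopic to the identity (it is the time-$t$ flow of $u$, with $A(0,\cdot)=\mathrm{id}$), it lifts to a map of $\R^d$ commuting with $\Z^d$-translations, so $A^i(t,x)=x^i+a^i(t,x)$ for a genuine smooth $a\colon[0,T]\times(\R/\Z)^d\to\R^d$ with $a(0,\cdot)=0$; the transport equation then reads $\partial_t a^i+u^j\nabla_j a^i=-u^i$, and $F_i(A(t,x),x)=F_i(x+a(t,x),x)$ is a smooth function of $(a,x)$. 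Applying the Hodge--Leray projection $\mathbb{P}$ onto $g$-divergence-free one-forms removes the pressure, so $(u,a)$ solves
\[
\partial_t u + \mathbb{P}\big(\nabla_u u + F(\cdot+a,\cdot)\big)=0,\qquad \partial_t a + \nabla_u a = -u,
\]
a quasilinear system of Kato type: the velocity equation is Euler plus a zeroth-order-in-$u$ forcing, and the $a$-equation is linear transport by $u$. Standard energy-method local well-posedness then applies in every Sobolev space $H^s$, $s>d/2+1$: for each $N$ there is a maximal time $T_N\in(0,T]$ and a unique smooth solution $(u^{(N)},a^{(N)})$ on $[0,T_N)$ with the prescribed initial data, with the continuation criterion that it extends as long as $\|u^{(N)}(t)\|_{H^s}$ stays bounded ($\|a^{(N)}(t)\|_{H^s}$ then being automatically controlled via Gr\"onwall on its transport equation), while the Moser/composition estimates needed to bound $\|\mathbb{P}F(\cdot+a,\cdot)\|_{H^s}$ are routine since $a$ is torus-bounded and $F$ is smooth. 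The pressure $p^{(N)}$ and the map $A^{(N)}$ are then recovered.

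Next I would run the difference estimate. Writing $w=u^{(N)}-u$, $b=a^{(N)}-a$ and subtracting the two systems gives
\[
\partial_t w + \mathbb{P}\big(\nabla_{u^{(N)}} w + \nabla_w u + (F^{(N)}-F)(\cdot+a^{(N)},\cdot) + F(\cdot+a^{(N)},\cdot)-F(\cdot+a,\cdot)\big)=0
\]
together with $\partial_t b + \nabla_{u^{(N)}} b + \nabla_w a = -w$, and $w(0,\cdot)=0$, $b(0,\cdot)=0$. Here $(F^{(N)}-F)(\cdot+a^{(N)},\cdot)$ is $O(\|F^{(N)}-F\|_{C^{s+1}})$ in $H^s$, the term $F(\cdot+a^{(N)},\cdot)-F(\cdot+a,\cdot)$ is $O(|b|)$ by the fundamental theorem of calculus, the coupling terms $\nabla_w u$ and $\nabla_w a$ carry no derivative on $w$ (their top-order pieces fall on the \emph{fixed} smooth reference fields $u$, $a$, costing only a constant depending on $\|u\|_{C^0_t H^{s+1}}$ and $\|a\|_{C^0_t H^{s+1}}$), and the pressure drops out of the $H^s$ energy identity since $w$ is $g$-divergence-free. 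The decisive point is that the principal part of this system is transport by $u^{(N)}$, so the commutator estimates close the $H^s$ energy inequality for $(w,b)$ with constants depending only on $\sup_t\|u^{(N)}(t)\|_{H^s}$ and on fixed norms of the reference solution --- no loss of derivatives --- whence Gr\"onwall from zero initial data yields, on any subinterval on which $u^{(N)}$ exists,
\[
\sup_t\big(\|w(t)\|_{H^s}+\|b(t)\|_{H^s}\big)\ \le\ C\,\|F^{(N)}-F\|_{C^{s+1}},
\]
with $C$ depending on $T$, on fixed norms of $(u,p,A,F)$, and on $\sup_t\|u^{(N)}(t)\|_{H^s}$. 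No Bona--Smith-type regularization is needed here, precisely because the initial data are unperturbed and the perturbation $F^{(N)}-F$ enters the equation semilinearly.

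Now I would close the bootstrap. Put $R:=\|u\|_{C^0_t H^s}+1$ and let $T'_N\le T_N$ be the largest time with $\|u^{(N)}(t)\|_{H^s}\le R$ on $[0,T'_N)$; applying the previous display with $\sup_t\|u^{(N)}\|_{H^s}\le R$ shows that for $N$ large its right-hand side is $<\tfrac12$, hence $\|u^{(N)}(t)\|_{H^s}\le\|u(t)\|_{H^s}+\tfrac12<R$ strictly on $[0,T'_N)$, and by the continuation criterion this rules out $T'_N<T$; thus $T_N=T$ and $u^{(N)}$ exists on all of $[0,T]$ with $\|u^{(N)}-u\|_{C^0_t H^s}\to0$. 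Running this argument for every $s>d/2+1$ gives convergence $u^{(N)}\to u$ and $a^{(N)}\to a$ (hence $A^{(N)}\to A$) in every spatial $H^s$, uniformly in $t$; recovering $p^{(N)}$ from the Poisson equation obtained by taking the $g$-divergence of \eqref{uni}, and recovering all time derivatives from the evolution equations themselves, Sobolev embedding upgrades this to convergence of $(u^{(N)},p^{(N)},A^{(N)})$ to $(u,p,A)$ in the $C^\infty$ topology on $[0,T]\times(\R/\Z)^d$, which is the assertion.

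I expect the crux to be the no-loss-of-derivatives energy estimate for the difference system: one must verify that the quasilinear structure --- transport by $u^{(N)}$, the Hodge--Leray projection in place of the pressure, and the purely passive coupling to $a$ --- really does allow the $H^s$ norm of $(w,b)$ to be bounded by itself plus the forcing perturbation, which is the standard but somewhat delicate heart of continuous dependence for Euler-type equations. The remaining ingredients (the lift of $A$, the local well-posedness, the continuation/bootstrap argument, and the passage from $H^s$ for all $s$ to $C^\infty$) are routine.
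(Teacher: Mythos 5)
Your proposal is correct and follows essentially the same route as the paper: an a priori $H^s$ energy estimate for the difference system with zero initial data, closed without loss of derivatives via the transport/divergence-free structure, combined with standard energy-method local existence and a continuity/bootstrap argument to reach time $T$ and upgrade to smooth convergence. The only deviations are cosmetic --- you eliminate the pressure with the Leray projection and recover it afterwards, where the paper keeps $q^{(N)}$ and bounds it via elliptic regularity applied to the divergence of the momentum equation, and you lift the torus-valued labels to $A = x + a$, which the paper handles implicitly by working with the difference $B^{(N)} = A^{(N)} - A$.
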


\begin{proof} Let $N$ be large.  If we write
\begin{align*}
 u^{(N)} &= u + v^{(N)} \\
 A^{(N)} &= A + B^{(N)} \\
 p^{(N)} &= p + q^{(N)} \\
\end{align*}
and
\begin{equation}\label{hnt}
 H^{(N)}(t,x,B) \coloneqq F^{(N)}(A(t,x)+B,x) - F(A(t,x),x)
\end{equation}
then the system \eqref{uni}-\eqref{uni-3} is equivalent to the difference system
\begin{align}
\partial_t v^{(N)}_i(t,x) + u^j \nabla_j v^{(N)}_i(t,x) + (v^{(N)})^j \nabla_j u_i(t,x) &\\
+ (v^{(N)})^j \nabla_j v^{(N)}_i(t,x) + H^{(N)}_i(t,x,B^{(N)}(t,x))  &= -\partial_i q^{(N)} \label{vni}\\
\partial_t (B^{(N)})^i + u^j \nabla_j (B^{(N)})^i + (v^{(N)})^j \nabla_j (B^{(N)})^i &= - (v^{(N)})^j \nabla_j A^i \label{vni-2}\\
\nabla_i (v^{(N)})^i &= 0\label{vni-3}
\end{align}
with initial conditions 
\begin{equation}\label{vni-4}
v^{(N)}(0) = B^{(N)}(0)=0.
\end{equation}

We use the method of \emph{a priori} estimates, combined with the energy method.  Assume that we can obtain a smooth solution to the above system on some time interval $[0,T^{(N)})$ with $0 < T^{(N)} < T$.  Let $s$ be a large natural number, and define the energy
$$ E^{(N)}_s(t) := \sum_{k=0}^s \frac{1}{2} \int_{(\R/\Z)^d}|\nabla^k v^{(N)}(t,x)|_g^2 + |\nabla^k B^{(N)}(t,x)|_g^2\ dg,$$
then $E^{(N)}_s(0)=0$, where $\nabla^k v$ denote the iterated $k$-fold covariant derivative of a tensor $v$ (thus increasing the rank of the tensor by $k$) and $|v|_g^2 = \langle v, v \rangle_g$ denotes the norm squared of a tensor relative to the metric $g$, and $dg$ is the Riemannian volume form.  From the initial conditions we have $E^{(N)}_s(0)=0$.  The time derivative can be computed as
$$ 
\partial_t E^{(N)}_s = -\sum_{k=0}^s \sum_{i=1}^8 G_{k,i}$$
where
\begin{align*}
G_{k,1} &\coloneqq \int_{(\R/\Z)^d} \langle \nabla^k (v^{(N)})^i, \nabla^k (u^j \nabla_j v^{(N)}_i) \rangle_g\ dg \\
G_{k,2} &\coloneqq \int_{(\R/\Z)^d} \langle \nabla^k (v^{(N)})^i, \nabla^k ((v^{(N)})^j \nabla_j u_i) \rangle_g\ dg \\
G_{k,3} &\coloneqq \int_{(\R/\Z)^d} \langle \nabla^k (v^{(N)})^i, \nabla^k ((v^{(N)})^j \nabla_j v^{(N)}_i) \rangle_g\ dg \\
G_{k,4} &\coloneqq \int_{(\R/\Z)^d} \langle \nabla^k (v^{(N)})^i, \nabla^k H^{(N)}_i(t,x,B^{(N)}) \rangle_g\ dg \\
G_{k,5} &\coloneqq \int_{(\R/\Z)^d} \langle \nabla^k (v^{(N)})^i, \nabla^k \partial_i q^{(N)} \rangle_g\ dg \\
G_{k,6} &\coloneqq \int_{(\R/\Z)^d} \langle \nabla^k (B^{(N)})^i, \nabla^k (u^j \nabla_j (B^{(N)})^i) \rangle_g\ dg \\
G_{k,7} &\coloneqq \int_{(\R/\Z)^d} \langle \nabla^k (B^{(N)})^i, \nabla^k ((v^{(N)})^j \nabla_j (B^{(N)})^i) \rangle_g\ dg \\
G_{k,8} &\coloneqq \int_{(\R/\Z)^d} \langle \nabla^k (B^{(N)})^i, \nabla^k ((v^{(N)})^j \nabla_j A^i) \rangle_g\ dg.
\end{align*}
We now use the asymptotic notation $X = O(Y)$ or $X \lesssim Y$ to denote the bound $|X| \leq C Y$ where $C$ can depend on $d, s, g, u, F, A$ but is uniform in $N$ (in particular, the first $s$ derivatives of the Riemann curvature tensor are $O(1)$).  From the product rule and H\"older's inequality we easily see that
$$ G_{k,2} = O( E^{(N)}_s ).$$
The quantity $G_{k,1}$ cannot be immediately estimated in this fashion due to the possibility that $s+1$ derivatives fall on the second $v^{(N)}$ factor.  However, from the divergence-free nature of $u$ we have
$$ \int_{(\R/\Z)^d} \langle \nabla^k (v^{(N)})^i, u^j \nabla_j \nabla^k v^{(N)}_i \rangle_g\ dg = 0$$
and by subtracting this from $G_{k,1}$ (and noting that all the curvature terms arising are lower order) we conclude that
$$ G_{k,1} = O( E^{(N)}_s ).$$
To treat $G_{k,3}$, we similarly subtract off the identity
$$ \int_{(\R/\Z)^d} \langle \nabla^k (v^{(N)})^i, (v^{(N)})^j \nabla_j \nabla^k v^{(N)}_i \rangle_g\ dg = 0$$
and eventually conclude from the triangle inequality that
$$ |G_{k,3}| \lesssim \sum_{k=0}^s \sum_{k_1+k_2 = k+1: k_1,k_2 \leq k} \int_{(\R/\Z)^d} |\nabla^k v^{(N)}|_g |\nabla^{k_1} v^{(N)}|_g |\nabla^{k_2} v^{(N)}|_g\ dg.$$
One of the exponents $k_1,k_2$ will be less than $\frac{s+1}{2}$; if $s$ is large enough, this term can be bounded pointwise by $O((E^{(N)}_s)^{1/2})$ by Sobolev embedding.  From H\"older's inequality, we then conclude that
$$ G_{k,3} = O( (E^{(N)}_s)^{3/2} ).$$
We now turn to $G_{k,4}$.  From the fundamental theorem of calculus, \eqref{hnt}, and the fact that $F^{(N)}$ converges in the smooth topology to $F$ one has the bound
$$  H^{(N)}(t,x,B) = O(B) + o(1)$$
uniformly in $t,x$, where $o(1)$ denotes a quantity that goes to zero as $N \to \infty$; similarly for any given derivative of $H^{(N)}$.  From this and many applications of the chain rule (or Faa di Bruno formula) we conclude that
$$ |\nabla^k H^{(N)}_i(t,x,B^{(N)})|_g \lesssim \sum_{j=0}^k \sum_{k_1+\dots+k_j \leq k} |\nabla^{k_1} B^{(N)}|_g \dots |\nabla^{k_j} B^{(N)}|_g |B^{(N)}|_g + o(1).$$
All but at most one of the indices $k_1,\dots,k_j$ will be at most $\frac{s}{2}$, so by Sobolev embedding as before we conclude that
$$ |\nabla^k H^{(N)}_i(t,x,B^{(N)})|_g \lesssim (1 + (E^{((N)}_s)^{s/2}) \sum_{j=0}^s |\nabla^j B^{(N)}|_g + o(1),$$
and then from H\"older's inequality we have
$$ |G_{k,4}| \lesssim (1 + (E^{((N)}_s)^{s/2}) E^{(N)}_s + o((E^{(N)}_s)^{1/2}).$$
To treat $G_{k,5}$, we observe from the divergence-free nature of $v^{(N)}_i$ that
$$ \int_{(\R/\Z)^d} \langle \nabla^k \nabla_i (v^{(N)})^i, \nabla^k q^{(N)} \rangle_g\ dg = 0;$$
adding this to $G_{k,5}$ and commuting covariant derivatives and integrating by parts, then using H\"older's inequality, we conclude that
$$ |G_{k,5}| \lesssim (E^{(N)}_s)^{1/2} (\sum_{j=1}^k \int_{(\R/\Z)^d} |\nabla^j q^{(N)}|_g^2\ dx)^{1/2}.$$
On the other hand, taking the divergence of \eqref{vni} and using the divergence-free nature of $v^{(N)}_i$, we have
\begin{equation}\label{nij}
 \nabla^i (u^j \nabla_j v^{(N)}_i + (v^{(N)})^j \nabla_j u_i + (v^{(N)})^j \nabla_j v^{(N)}_i) + \nabla^i H^{(N)}_i(t,x,B^{(N)}(t,x)) = -\Delta q^{(N)}
\end{equation}
where $\Delta$ is the Laplace-Beltrami operator.  From elliptic regularity, the Leibniz\footnote{If one wishes, one can use the divergence-free nature of $v^{(N)}_i$ and $u_i$ to replace the terms on the left-hand side of \eqref{nij} involving second derivatives with lower order curvature terms, but this will not be necessary to close the argument here, as the only non-zero terms here are curvature terms and are thus already of lower order.} and chain rules, H\"older, and Sobolev, we then have the estimate
$$ (\sum_{j=1}^k \int_{(\R/\Z)^d} |\nabla^j q^{(N)}|_g^2\ dx)^{1/2} \lesssim (1 + (E^{(N)}_s)^{s/2}) (E^{(N)}_s)^{1/2} + o(1)$$ 
and thus
$$ |G_{k,5}| \lesssim (1 + (E^{((N)}_s)^{s/2}) E^{(N)}_s + o((E^{(N)}_s)^{1/2}).$$
By repeating the argument used to estimate $G_{k,1}, G_{k,2}, G_{k,3}$, we have
\begin{align*}
 G_{k,6} &= O( E^{(N)}_s ) \\
 G_{k,7} &= O( (E^{(N)}_s)^{3/2} ) \\
 G_{k,8} &= O( E^{(N)}_s ).
\end{align*}
Putting everything together, we conclude that
$$ 
|\partial_t E^{(N)}_s| \lesssim (1 + (E^{((N)}_s)^{s/2}) E^{(N)}_s + o((E^{(N)}_s)^{1/2})$$
and then a standard continuity argument (combined with the initial condition $E^{(N)}_s(0)=0$) then shows that for $N$ large enough one has
\begin{equation}\label{enst}
 E^{(N)}_s(t) = o(1)
\end{equation}
uniformly in $0 \leq t < T^{(N)}$.  In particular, the energy $E^{(N)}_s(t)$ does not blow up as $t$ approaches $T^{(N)}$, which by standard energy method local existence theory (adapting for instance the arguments in \cite[\S 3.2]{bertozzi}) implies (for $s$ large enough) that one can in fact solve the initial value problem \eqref{vni}-\eqref{vni-4} smoothly (and uniquely) all the way up to time $T$.  From \eqref{enst} and Sobolev embedding we then see that $v^{(N)}$ converges in the smooth topology to zero, and thus $u^{(N)}$ converges in the smooth topology to $u$ as required.
\end{proof}

\end{document}